\newcommand{\D}{\mathbb{D}}
\newcommand{\DD}{\widehat{\mathcal{D}}}
\newcommand{\Dd}{\widecheck{\mathcal{D}}}
\newcommand{\DDD}{\mathcal{D}}
\newcommand{\N}{\mathbb{N}}
\renewcommand{\H}{\mathcal{H}}
\newcommand{\B}{\mathcal{B}}
\def\tom{\widetilde{\omega}}
\def\bpnu{B_p(\nu)}
\def\binfnu{\mathcal{B}_\infty(\nu)}
\def\binfhat{\mathcal{B}_\infty(\widehat{\mathcal{D}})}
\def\binfD{\mathcal{B}_\infty(\mathcal{D})}
\newcommand{\e}{\varepsilon}
\newcommand{\vp}{\varphi}
\newcommand{\z}{\zeta}
\newcommand{\om}{\omega}
\def\g{\gamma}
\def\t{\theta}
\def\a{\alpha}
\def\b{\beta}
\newtheorem{theorem}{Theorem}
\newtheorem{lemma}[theorem]{Lemma}
\newtheorem{proposition}[theorem]{Proposition}
\newtheorem{lettertheorem}{Theorem}
\newtheorem{letterlemma}[lettertheorem]{Lemma}
\newtheorem{letterproposition}[lettertheorem]{Proposition}
\theoremstyle{definition}
\newenvironment{Prf}{\noindent{\emph{Proof of}}}
{\hfill$\Box$ }
\newenvironment{proofof}[1]
{\noindent{\it Proof of #1 }}{\hfill $\Box$\par\vspace{2.5mm}}
\numberwithin{equation}{section}
\begin{document}
\title[Weighted norm inequalities for derivatives on Bergman spaces]
{Weighted norm inequalities for derivatives on Bergman spaces}

\author{Jos\'e \'Angel Pel\'aez}
\address{Departamento de An\'alisis Matem\'atico, Universidad de M\'alaga, Campus de
Teatinos, 29071 M\'alaga, Spain. Phone number: 0034952131911} \email{japelaez@uma.es} 

\author{Jouni R\"atty\"a}
\address{University of Eastern Finland, P.O.Box 111, 80101 Joensuu, Finland}
\email{jouni.rattya@uef.fi}

\thanks{This research was supported in part by Ministerio de Ciencia Innovación y universidades, Spain, projects
PGC2018-096166-B-100 and MTM2017-90584-REDT; La Junta de Andaluc{\'i}a,
project FQM210; 
Academy of Finland 286877.}

\date{\today}


\keywords{Bergman space, Carleson measure, integral operator, Littlewood-Paley inequality, H\"ormander-type maximal function, resolvent set}

\begin{abstract}
An equivalent norm in the weighted Bergman space $A^p_\om$, induced by an $\om$ in a certain large class of non-radial weights, is established in terms of higher order derivatives. Other Littlewood-Paley inequalities are also considered. On the way to the proofs, we characterize the $q$-Carleson measures for the weighted Bergman space $A^p_\om$ and the boundedness of a H\"ormander-type maximal function. Results obtained are further applied to describe the resolvent set of the integral operators $T_g(f)(z)=\int_0^z g'(\zeta)f(\zeta)\,d\z$ acting on $A^p_\om$.
\end{abstract}

\maketitle

\section{Introduction and main results}

Let $\H(\D)$ denote the algebra of all analytic functions in the unit disc $\D=\{z:|z|<1\}$ of the complex plane
$\mathbb{C}$. A function $\omega:\D\to [0,\infty)$, integrable over $\D$, is called a weight. It is radial if $\omega(z)=\omega(|z|)$ for all $z\in\D$ and $\int_0^1\om(s)\,ds<\infty$. For $0<p<\infty$ and a weight $\omega$, the weighted Bergman
space $A^p_\omega$ consists of those $f\in\H(\D)$ for which
    $$
    \|f\|_{A^p_\omega}^p=\int_\D|f(z)|^p\omega(z)\,dA(z)<\infty,\index{$\Vert\cdot\Vert_{A^p_\omega}$}
    $$
where $dA(z)=\frac{dx\,dy}{\pi}$ is the normalized Lebesgue area measure on $\D$.
 
In this paper we are interested in obtaining, for a large class of non-radial weights $\omega$, equivalent norms of $f$ in $A^p_\omega$ in terms of its higher order derivatives. This is a question that have been extensively studied for different classes of 
radial weights but it is not well-understood for general weights. See \cite{AlCo,APR2,BWZ} for recent developments on the topic.
These  norms are extremely valuable within the theory of concrete operators acting on these spaces. To name a few instances, they are used; in the study of Volterra type operators because they allow to get rid of the integral and they arise in a natural way in the description of its spectrum \cite{AlCo,APR2,AS,PelRat}, 
 in order to get crucial estimates in the description of Schatten classes of Toeplitz operators  \cite[p.356]{Lu87}, 
 in the boundedness of the Hilbert matrix  \cite[Proof of Theorem~2]{PelRathg} or in obtaining  $A^p_\omega$ norms of  Bergman reproducing kernels induced by radial weights   
\cite[Proof of Theorem~1]{PelRatproj}.

 \medskip
 A well-known formula ensures that for each $k\in\N$ and $0<p<\infty$ we have
	\begin{equation}\label{LP1}
	\|f\|_{A^p_\om}^p\asymp\int_\D|f^{(k)}(z)|^p(1-|z|)^{kp}\om(z)\,dA(z)+\sum_{j=0}^{k-1}|f^{(j)}(0)|^p,\quad f\in\H(\D),
	\end{equation}
if $\omega$ is a standard radial weight, that is, $\om(z)=(\alpha+1)(1-|z|^2)^\alpha$ for some $-1<\alpha<\infty$. Generalizations of this result for different classes of radial weights have been obtained in \cite{AS,PelaezRattya2019,PavP,Si}. In particular, it was recently proved~\cite[Theorem~5]{PelaezRattya2019} that \eqref{LP1} holds for a radial weight $\om$ if and only if $\om\in\DDD=\DD\cap\Dd$. Recall that a radial weight $\nu$ belongs to~$\DD$ if there exists a constant $C=C(\nu)>1$ such that the tail integral $\widehat{\nu}(z) = \int_{|z|}^1 \nu(s)\,ds$ satisfies the doubling condition $\widehat{\nu}(r)\le C\widehat{\nu}(\frac{1+r}{2})$ for all $0\le r<1$. Further, a radial weight $\nu$ belongs to~$\Dd$ if there exist constants $K=K(\nu)>1$ and $C=C(\nu)>1$ such that $\widehat{\nu}(r)\ge C\widehat{\nu}(1-\frac{1-r}{K})$ for all $0\le r<1$.

For a given $a\in\D\setminus\{0\}$, consider the interval $I_a=\left\{e^{i\theta}:|\arg(a e^{-i\theta})|\le \frac{(1-|a|)}{2}\right\}$, and let $S(a)=\{z\in\D: |z|\ge |a|,\, e^{it}\in I_a\}$ denote the Carleson square induced by $a$. We assume throughout the paper that $\om(S(a))>0$ for all $a\in\D\setminus\{0\}$. If this is not the case and $\om$ is radial, then $A^p_\om=\H(\D)$. For a weight $\nu$, $\om$ is a $\nu$-weight if $\om\nu$ is integrable. If $1<p<\infty$, a $\nu$-weight $\om$ belongs to the class $B_p(\nu)$ if there exists a constant $C=C(p,\nu,\om)$ such that
	\begin{equation}\label{bpdef}
	\sup_{S}\frac{\left(\int_S \om(z)\nu(z)\,dA(z) \right)^{\frac{1}{p}}\left(\int_S 
	\om^{-\frac{p'}{p}}(z)\nu(z)\,dA(z) \right)^{\frac{1}{p'}}}
    {\int_S \nu(z)\,dA(z)}<\infty,
	\end{equation}
where the supremum is taken over all Carleson squares $S$. We denote $B_\infty(\nu)=\bigcup_{1<p<\infty}B_p(\nu)$.
It has recently been proved    that  the class $B_p(\nu)$ describes the weights $\om$
such that  Bergman projection $P_\nu$, induced by a radial weight $\nu$, is bounded on  
$L^p_{\om\nu}$, $1<p<\infty$, whenever 
$\nu\in\DDD$ and the the Bergman reproducing kernel of $A^2_\nu$
has a particular integral representation \cite[Theorem~2]{PRW}. This result is a natural extension of a classical result  
due to Bekoll\'e and Bonami~\cite{Bek,BB} for standard weights. If $\nu(z)=(1+\eta)(1-|z|^2)^\eta$ we simply write $B_p(\eta)$ instead of $B_p((1-|z|)^\eta)$, $B_\infty(\eta)=B_\infty((1-|z|)^\eta)$ and $B_\infty= B_\infty(0)$. Nonnegative functions in the class  $B_p(\eta)$ or $B_\infty(\eta)$ are usually called the Bekoll\'e-Bonami weights. En route to describing the spectrum of the integral operator 
    \begin{displaymath}
    T_g(f)(z)=\int_{0}^{z}f(\zeta)\,g'(\zeta)\,d\zeta,\quad z\in\D, \quad g\in\H(\D),
    \end{displaymath}
on  standard Bergman spaces it was shown that \eqref{LP1} is satisfied if there exists $\eta>-1$ such that $\frac{\om}{(1-|z|)^\eta}\in B_\infty(\eta)$~\cite[Theorem~3.1]{AlCo}. The first result of this study says that the hypothesis $\frac{\om}{(1-|z|^2)^\eta}\in B_\infty(\eta)$ can be replaced by the weaker condition $\frac{\om}{\nu}\in B_\infty(\nu)$, $\nu\in\DDD$. To simplify the notation, we write $\binfnu=\bigcup_{1<p<\infty} \left\{\om: \frac{\om}{\nu}\in B_p(\nu) \right\}$, $\binfhat=\bigcup_{\nu\in\DD} \binfnu$
 and $\binfD=\bigcup_{\nu\in\DDD}\binfnu$.

\begin{theorem}\label{th:LPII}
Let $0<p<\infty$, $k\in\N$ and $\om\in \mathcal{B}_\infty(\DDD)$. Then \eqref{LP1} holds.
\end{theorem}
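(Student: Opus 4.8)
The plan is to establish the case $k=1$ in detail and then to deduce the general case. For the latter step one checks that $\mathcal{B}_\infty(\DDD)$ is closed under the map $\om\mapsto(1-|z|)^p\om$: writing $\om=W\nu$ with $\nu\in\DDD$ and $W=\om/\nu\in B_{p_0}(\nu)$ for some $p_0\in(1,\infty)$, this reduces to the routine fact that $(1-|z|)^p\nu\in\DDD$, together with the verification, by comparing the $B_{p_0}$-quotients on each Carleson square, that $W\in B_{p_0}((1-|z|)^p\nu)$; one then applies the case $k=1$ successively to $f,f',\dots,f^{(k-1)}$ with the weights $\om,(1-|z|)^p\om,\dots,(1-|z|)^{(k-1)p}\om$. (Alternatively, the general case follows from the same arguments applied to the Taylor remainder $f(z)-\sum_{j=0}^{k-1}\frac{f^{(j)}(0)}{j!}z^j=\frac{1}{(k-1)!}\int_0^z f^{(k)}(\z)(z-\z)^{k-1}\,d\z$.) Throughout, $\Delta(z)$ denotes a pseudohyperbolic disc of fixed radius centred at $z$, one uses that $\om$ is doubling on Carleson squares (a property inherited from $\nu\in\DD$ via the $B_{p_0}(\nu)$ condition), and the terms $|f^{(j)}(0)|^p$ are handled by the boundedness of point evaluations on $A^p_\om$.

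For the estimate $\int_\D|f'(z)|^p(1-|z|)^p\om(z)\,dA(z)\lesssim\|f\|_{A^p_\om}^p$ I would start from the standard pointwise bound $|f'(z)|^p(1-|z|)^p\lesssim|\Delta(z)|^{-1}\int_{\Delta(z)}|f|^p\,dA$, a consequence of Cauchy's formula and the subharmonicity of $|f|^p$. Multiplying by $\om$, integrating over $\D$ and applying Fubini's theorem bounds the left-hand side by $\int_\D|f(\z)|^p\,d\mu(\z)$, where $\mu$ is a positive measure comparable to $\big(\om(\Delta(\z))/|\Delta(\z)|\big)\,dA(\z)$, for which a direct computation gives $\mu(S(a))\lesssim\om(S(a))$ for every Carleson square $S(a)$ (using the doubling of $\om$). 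This is precisely the geometric condition with exponent $q=p$, so the characterization of $q$-Carleson measures for $A^p_\om$ — obtained earlier through the $L^p_\om$-boundedness of a H\"ormander-type maximal function — yields $\int_\D|f(\z)|^p\,d\mu(\z)\lesssim\|f\|_{A^p_\om}^p$, which is the upper bound in \eqref{LP1} when $k=1$.

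The main obstacle is the reverse inequality $\|f\|_{A^p_\om}^p\lesssim\int_\D|f'(z)|^p(1-|z|)^p\om(z)\,dA(z)+|f(0)|^p$, which requires reconstructing $f$ from $f'$ compatibly with the weight. One route is the pointwise estimate $|f(z)-f(0)|\lesssim\int_{\g_z}|f'(\z)|\,|d\z|$ along a suitable curve $\g_z$ from $0$ to $z$ lying in a fixed non-tangential tube; inserting and removing the factor $(1-|\z|)$ and applying H\"older's inequality on $\g_z$ dominates $|f(z)-f(0)|$ by a power of a H\"ormander-type maximal function of the subharmonic function $\z\mapsto|f'(\z)|^p(1-|\z|)^p$, after which integration against $\om$ and the $L^p_\om$-boundedness of that maximal function close the estimate. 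A more structural alternative is to use the reproducing formula of $A^2_\nu$ for the radial weight $\nu\in\DDD$ underlying $\om$, writing $f(z)=f(0)+\int_\D f'(\z)\,\overline{G_\nu(z,\z)}\,\nu(\z)\,dA(\z)$ for an explicit kernel $G_\nu$ built from the Bergman kernel $B^\nu$ of $A^2_\nu$, and then proving that the associated integral operator is bounded on $L^p_\om$ by combining known estimates for $B^\nu$ with the $q$-Carleson measure and maximal function machinery. In both routes the delicate points are that the whole range $0<p<\infty$ must be treated uniformly, so that no duality argument is available when $p\le1$, and that the kernel estimates, which encode fine properties of $B^\nu$, must be shown compatible with the Bekoll\'e-type perturbation $W=\om/\nu$; this compatibility is where I expect the bulk of the technical work to lie.
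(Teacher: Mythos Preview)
Your upper bound and your reduction of general $k$ to $k=1$ via the closure of $\binfD$ under $\om\mapsto(1-|z|)^p\om$ are both sound. The closure really is as easy as you indicate once one notes that for a radial $\nu\in\DDD$ one has $\nu_{[p]}(S(a))\asymp(1-|a|)^p\nu(S(a))$, so the $B_{p_0}$-quotient of $W=\om/\nu$ against $\nu_{[p]}$ is dominated by the one against $\nu$. The paper proves this closure as well (by a slightly different argument) but does not induct on $k$; it handles all $k$ at once.

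The problem is the lower bound. Route (a) is not merely incomplete, it fails: take $f'(\zeta)=(1-\zeta)^{-1}$, so that $|f'(\zeta)|^p(1-|\zeta|)^p\le1$ throughout $\D$; every H\"ormander-type Carleson-square average of this function, and hence any power of the maximal function you propose, is uniformly bounded, yet $|f(z)-f(0)|=|\log(1-z)|$ blows up as $z\to1$. A single Carleson-square average cannot see the accumulation along $\gamma_z$, so the pointwise domination you describe cannot hold. Route (b) is in the right direction but you have not located the actual difficulty.

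The paper's argument is a version of your route (b) with the \emph{standard} kernel: from the reproducing formula in $A^1_{\beta-1}$ one gets
\[
\Bigl|f(z)-\sum_{j<k}\tfrac{f^{(j)}(0)}{j!}z^j\Bigr|\lesssim\int_\D\frac{|f^{(k)}(\zeta)|(1-|\zeta|)^{\beta+k-1}}{|1-\bar z\zeta|^{1+\beta}}\,dA(\zeta)
\]
for $\beta$ large. For $0<p\le1$ one moves the $p$-th power inside via an elementary embedding, applies Fubini, and estimates $\int_\D\om(z)|1-\bar z\zeta|^{-p(1+\beta)}\,dA(z)\lesssim\om(S(\zeta))(1-|\zeta|)^{-p(1+\beta)}$ by Lemma~\ref{Lemma:test-functions-non-radial}(iv); this already delivers $\widetilde{\om}$ on the right. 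For $p>1$ the same scheme with H\"older's inequality forces a kernel integral of the shape $\int_\D\om(z)(1-|z|)^{-\e}|1-\bar z\zeta|^{-p\alpha_1}\,dA(z)$ for a small $\e>0$, and the whole proof hinges on first showing that $\om_{[-\e]}\in\DD(\D)$ so that Lemma~\ref{Lemma:test-functions-non-radial}(iv) applies to it; this is precisely where the $\Dd$ half of $\om\in\DDD(\D)$ is used (via Lemma~\ref{le:Dd}), and it is the step your outline does not anticipate. In either range of $p$ one lands at the lower bound with the averaged weight $\widetilde{\om}$ on the right; the final step replaces $\widetilde{\om}$ by $\om$ through Theorem~\ref{th:cm} applied to $A^p_{\om_{[kp]}}$, and it is here, already for $k=1$, that the closure $\om_{[p]}\in\binfD$ is actually consumed.
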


Observe that the set of radial weights in $\mathcal{B}_\infty(\DDD)$ coincides with $\DDD$, and hence \cite[Theorem~5]{PelaezRattya2019} implies that $\om\in\mathcal{B}_\infty(\DDD)$ is also a necessary condition for \eqref{LP1} to hold if $\om$ is radial. 
The class $\DDD$ also appears innately in the study of classical questions related to the   boundedness of the Bergman projection $P_\nu$ induced by a radial weight $\nu$  \cite[Theorem~3 and Theorem~12]{PelaezRattya2019}, which is a  frequently used tool in order to get 
Littlewood-Paley formulas in weighted Bergman spces \cite{AlCo,Zhu}. 
Moreover,
$\mathcal{B}_\infty(\DDD)$ is in a sense a much larger class than 
	$$
	\bigcup_{\eta>-1} \left\{\om\textrm{ is a weight}: \frac{\om}{(1-|z|)^\eta}\in B_\infty(\eta)\right\}
	$$
because it contains weights which may vanish on a set of positive Lebesgue area measure. It is also worth mentioning that \eqref{LP1} holds if and only if $\om\in B_\infty$, when $\om$ is essentially (or almost) constant in each hyperbolically bounded region~\cite[Corollary~4.4]{APR2}. This last condition ensures that the inequality
	\begin{equation}\label{LP2}
	\int_\D|f^{(k)}(z)|^p(1-|z|)^{kp}\om(z)\,dA(z)+\sum_{j=0}^{k-1}|f^{(j)}(0)|^p\lesssim \|f\|_{A^p_\om}^p ,\quad f\in\H(\D),
	\end{equation}
holds~\cite[Theorem~3.1]{AlCo}, see also~\cite[Theorem~A]{BWZ}. We prove the following result concerning this last inequality. 

\begin{theorem}\label{th:LPI}
Let $0<p<\infty$, $k\in\N$ and $\om\in\binfhat$. Then \eqref{LP2} holds.
\end{theorem}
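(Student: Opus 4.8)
The plan is to derive \eqref{LP2} from the description of Carleson measures for $A^p_\om$ by way of a classical pointwise estimate. Since $\om(S(a))>0$ for all $a\in\D\setminus\{0\}$, the point evaluations $f\mapsto f^{(j)}(0)$, $0\le j\le k-1$, are bounded on $A^p_\om$, so the terms $\sum_{j=0}^{k-1}|f^{(j)}(0)|^p$ in \eqref{LP2} are dominated by $\|f\|_{A^p_\om}^p$ and may be discarded. For the integral term, fix $r\in(0,1)$ and write $\Delta(z,r)$ for the pseudohyperbolic disc of radius $r$ centered at $z$. Cauchy's integral formula for $f^{(k)}$ together with the subharmonicity of $|f|^p$ gives, for all $z\in\D$ and $f\in\H(\D)$,
\[
|f^{(k)}(z)|^p(1-|z|)^{kp}\lesssim\frac{1}{(1-|z|)^2}\int_{\Delta(z,r)}|f(\z)|^p\,dA(\z).
\]
Multiplying by $\om(z)$, integrating over $\D$, and applying Fubini's theorem --- using the symmetry of the relation $\z\in\Delta(z,r)$ in $z$ and $\z$, and $1-|z|\asymp 1-|\z|$ on $\Delta(z,r)$ --- yields
\[
\int_\D|f^{(k)}(z)|^p(1-|z|)^{kp}\om(z)\,dA(z)\lesssim\int_\D|f(\z)|^p\,d\mu(\z),\qquad d\mu(\z)=\frac{\om(\Delta(\z,r))}{(1-|\z|)^2}\,dA(\z).
\]
Hence it suffices to show that $\mu$ is a Carleson measure for $A^p_\om$.

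By the characterization of Carleson measures for $A^p_\om$ with $\om\in\binfhat$ obtained in this paper, this reduces to verifying the geometric condition $\sup_{a}\mu(S(a))/\om(S(a))<\infty$, the supremum being over $a\in\D\setminus\{0\}$. I would estimate $\mu(S(a))$ by rewriting it, via Fubini again, as $\int_\D\om(z)\bigl(\int_{S(a)\cap\Delta(z,r)}(1-|\z|)^{-2}\,dA(\z)\bigr)\,dA(z)$. The inner integral vanishes unless $\Delta(z,r)$ meets $S(a)$, which confines $z$ to a dilated Carleson square $\widetilde S(a)$ comparable to $S(a)$ (inner radius lowered and arc widened by factors depending only on $r$); and when it is nonzero it is at most $\int_{\Delta(z,r)}(1-|\z|)^{-2}\,dA(\z)\asymp 1$. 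Therefore $\mu(S(a))\lesssim\om(\widetilde S(a))$, and it remains to prove the doubling-type bound $\om(\widetilde S(a))\lesssim\om(S(a))$.

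This last point is where the hypothesis $\om\in\binfhat$ is used. Write $\om=u\nu$ with $\nu\in\DD$ radial and $u=\om/\nu\in B_q(\nu)$ for some $q\in(1,\infty)$. Applying H\"older's inequality to $\nu(E)=\int_E u^{1/q}\cdot u^{-1/q}\nu\,dA$ and then \eqref{bpdef} gives, for every Carleson square $S$ and every measurable $E\subset S$,
\[
\frac{\nu(E)}{\nu(S)}\lesssim\Bigl(\frac{\om(E)}{\om(S)}\Bigr)^{1/q},\qquad\text{i.e.}\qquad\om(S)\lesssim\Bigl(\frac{\nu(S)}{\nu(E)}\Bigr)^{q}\om(E).
\]
Taking $S=\widetilde S(a)$ and $E=S(a)$, it suffices to observe that $\nu(\widetilde S(a))\asymp\nu(S(a))$: this holds because $\nu$ is radial, so that $\nu(S(b))\asymp(1-|b|)\widehat\nu(b)$, and passing from $S(a)$ to the comparable square $\widetilde S(a)$ changes $1-|a|$ and, since $\nu\in\DD$, also $\widehat\nu(a)$ only by bounded factors. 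Consequently $\om(\widetilde S(a))\lesssim\om(S(a))$, hence $\sup_a\mu(S(a))/\om(S(a))<\infty$, $\mu$ is a Carleson measure for $A^p_\om$, and \eqref{LP2} follows.

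I expect the genuine difficulty to lie not in the argument above but in the ingredient it rests on, namely the description of Carleson measures for $A^p_\om$ when $\om$ ranges over $\binfhat$: such weights may vanish on sets of positive area measure, so the usual submean-value and Vitali-covering arguments are unavailable, and the passage from the geometric testing condition to the embedding must be carried out through the Bekoll\'e--Bonami structure itself. Within the proof proper, the only steps requiring real care are the two Fubini manipulations --- in particular, bounding how far $\Delta(z,r)$ can protrude from $S(a)$ --- and the Bekoll\'e--Bonami doubling estimate together with the bookkeeping of the exponent $q$; the pointwise estimate and the reduction to a Carleson measure are entirely routine.
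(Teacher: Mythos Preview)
Your proof is correct and essentially identical to the paper's: both use the subharmonicity-based pointwise estimate \eqref{Eq:suharmonic-n-derivatives}, Fubini, and the Carleson-measure characterization (Theorem~\ref{th:cm}) to reduce matters to the doubling estimate $\om(\widetilde S(a))\lesssim\om(S(a))$; your measure $\mu$ is exactly the paper's $\om_{h,r}\,dA$, and the Carleson bound you verify is Proposition~\ref{pr:averageweight}(i). The only cosmetic difference is that the paper records the inclusion $\binfhat\subset\DD(\D)$ once (via the $KT(\nu)$-property, Proposition~A) and then invokes Lemma~\ref{Lemma:test-functions-non-radial}(ii) for the doubling, whereas you re-derive that same inequality directly from the $B_q(\nu)$ condition via H\"older --- the underlying computation is the same.
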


Obviously there are weights in $\binfhat$ which are not essentially constant in each hyperbolically bounded region. Moreover, $\binfhat$ describes the radial weights such that \eqref{LP2} holds by \cite[Theorem~6]{PelaezRattya2019}, because the radial weights in the class $\mathcal{B}_\infty(\DD)$ coincide with $\DD$.
 
The proofs of Theorems~\ref{th:LPII} and~\ref{th:LPI} have three key ingredients. The first of them provides a geometric description of the $q$-Carleson measures for $A^p_\om$, provided $q\ge p$ and $\om\in\binfhat$. To state the result, for a given measure $\mu$ on $\D$, we write $\mu(E)=\int_E d\mu$ for each $\mu$-measurable set $E\subset\D$. Further, for each $\vp\in L^1_\om$, the H\"ormander-type maximal function~\cite{HormanderL67} is defined by 
   $$
    M_{\om}(\vp)(z)=\sup_{z\in S}\frac{1}{\om\left(S\right)}\int_{S}|\vp(\z)|\om(\z)\,dA(\z),\quad
    z\in\D.
    $$
Our characterization of $q$-Carleson measures for $A^p_\om$ reads as follows.

\begin{theorem}\label{th:cm}
Let $0<p\le q<\infty$, $\om\in\binfhat$ and
 let $\mu$ be a positive Borel measure on $\D$. Then the following statements are equivalent:
\begin{itemize}
\item[\rm(i)] $\mu$ is a $q$-Carleson measure for $A^p_\omega$;
\item[\rm(ii)] $[M_{\om}((\cdot)^{\frac{1}{\alpha}})]^{\alpha}:L^p_\omega\to
L^q_\mu$ is bounded for each $\alpha>\frac{1}{p}$;
\item[\rm(iii)] $\mu$ satisfies 
 \begin{equation}\label{eq:s1n}
    \sup_{S}\frac{\mu\left(S\right)}{\left(\om\left(S\right)\right)^\frac{q}p}<\infty,
    \end{equation}
    where the supremum runs over all the Carleson squares $S$ in $\D$.
\end{itemize}    
Moreover,
	\begin{equation}\label{equiva}
	\|I_d\|^q_{A^p_\om\to L^q_\mu}
	\asymp \|[M_{\om}((\cdot)^{\frac{1}{\alpha}})]^{\alpha}\|^q_{L^p_\om\to L^q_\mu}
	\asymp\sup_{S}\frac{\mu\left(S\right)}{\left(\om\left(S\right)\right)^\frac{q}p}.
	\end{equation}
\end{theorem}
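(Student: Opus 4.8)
\medskip
\noindent\emph{Plan of proof.}
The plan is to prove the cycle of implications $(\mathrm{i})\Rightarrow(\mathrm{iii})\Rightarrow(\mathrm{ii})\Rightarrow(\mathrm{i})$ and to read off \eqref{equiva} by bookkeeping the constants produced along the way. Fix $\nu\in\DD$ and $1<p_0<\infty$ with $\om/\nu\in B_{p_0}(\nu)$. Two preliminary facts about such an $\om$ will carry the argument, and I would isolate them first. The first is a regularity statement for the set function $S\mapsto\om(S)$: writing $S(a_n)$ for the Carleson square concentric with $S(a)$ whose side length is $\asymp 2^{n}(1-|a|)$, one has $\om(S(a_n))\lesssim\lambda^{n}\om(S(a))$ for some $\lambda=\lambda(\om)>1$. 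This follows from $\widehat{\nu}(1-2t)\asymp\widehat{\nu}(1-t)$ (a consequence of $\nu\in\DD$), which gives the same type of bound for $\nu$, together with the estimate $\om(S')\lesssim\om(S)\bigl(\nu(S')/\nu(S)\bigr)^{p_0}$ for concentric $S'\supset S$, obtained by applying the $B_{p_0}(\nu)$ inequality to $S'$ and Hölder's inequality on $S$. The second is a weighted pointwise estimate for analytic functions: for each $\alpha>\frac1p$ there is a constant so that
\[
|f(z)|\lesssim\bigl[M_{\om}(|f|^{1/\alpha})(z)\bigr]^{\alpha},\qquad z\in\D,\ f\in\H(\D).
\]
One proves this by applying the sub-mean value property to $|f|^{1/\alpha}$ on a fixed-radius hyperbolic disc $\Delta(z)$ sitting inside a Carleson square $S\ni z$ of side $\asymp 1-|z|$, and then splitting the resulting unweighted average by Hölder's inequality; the factor $\int_{\Delta(z)}\om^{-\beta}\,dA$ that appears is finite, and is controlled uniformly in $z$, precisely through the $B_{p_0}(\nu)$ quantity and the $\DD$-properties of $\nu$.

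\medskip
\noindent$(\mathrm{iii})\Rightarrow(\mathrm{ii})$.
I would first establish that $M_{\om}$ is of weak type $(1,1)$ with respect to $\om\,dA$: for $\varphi\in L^1_\om$ and $\lambda>0$, a Vitali-type selection among the Carleson squares witnessing $M_\om\varphi>\lambda$ produces pairwise disjoint squares $\{S_j\}$ with $\{M_\om\varphi>\lambda\}\subset\bigcup_j 3S_j$ and $\sum_j\om(S_j)\lesssim\lambda^{-1}\|\varphi\|_{L^1_\om}$, where the doubling in (A) lets one pass from $\om(3S_j)$ to $\om(S_j)$. Writing $N=\sup_S\mu(S)/\om(S)^{q/p}$ and using $q\ge p$, so that $\ell^{1}\hookrightarrow\ell^{q/p}$,
\[
\mu(\{M_\om\varphi>\lambda\})\le\sum_j\mu(3S_j)\lesssim N\sum_j\om(S_j)^{q/p}\le N\Bigl(\sum_j\om(S_j)\Bigr)^{q/p}\lesssim N\bigl(\lambda^{-1}\|\varphi\|_{L^1_\om}\bigr)^{q/p},
\]
so $M_\om\colon L^1_\om\to L^{q/p,\infty}_\mu$ with quasinorm $\lesssim N^{p/q}$. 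Interpolating (Marcinkiewicz) against the trivial bound $M_\om\colon L^\infty_\om\to L^\infty_\mu$ gives $M_\om\colon L^{t}_\om\to L^{tq/p}_\mu$ for every $1<t<\infty$, with norm $\lesssim N^{p/(qt)}$. Now fix $\alpha>\frac1p$, put $t=\alpha p>1$, and apply this to $\varphi=|f|^{1/\alpha}$ with $f\in L^p_\om$; since $\|\varphi\|_{L^{\alpha p}_\om}=\|f\|_{L^p_\om}^{1/\alpha}$ we obtain
\[
\bigl\|[M_\om(|f|^{1/\alpha})]^{\alpha}\bigr\|_{L^q_\mu}=\bigl\|M_\om(|f|^{1/\alpha})\bigr\|_{L^{\alpha q}_\mu}^{\alpha}\lesssim N^{1/q}\|f\|_{L^p_\om},
\]
which is (ii), together with the quantitative bound $\bigl\|[M_\om((\cdot)^{1/\alpha})]^{\alpha}\bigr\|^{q}_{L^p_\om\to L^q_\mu}\lesssim N$.

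\medskip
\noindent$(\mathrm{ii})\Rightarrow(\mathrm{i})$ \emph{and} $(\mathrm{i})\Rightarrow(\mathrm{iii})$.
For the first, fix any $\alpha>\frac1p$ and integrate the pointwise estimate from the plan against $\mu$: $\int_\D|f|^q\,d\mu\lesssim\int_\D[M_\om(|f|^{1/\alpha})]^{\alpha q}\,d\mu\lesssim\bigl\|[M_\om((\cdot)^{1/\alpha})]^{\alpha}\bigr\|^{q}_{L^p_\om\to L^q_\mu}\,\|f\|_{A^p_\om}^q$, so $\mu$ is a $q$-Carleson measure for $A^p_\om$ and $\|I_d\|^{q}_{A^p_\om\to L^q_\mu}\lesssim\bigl\|[M_\om((\cdot)^{1/\alpha})]^{\alpha}\bigr\|^{q}_{L^p_\om\to L^q_\mu}$. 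For the second, given $a\in\D\setminus\{0\}$ test with $f_a(z)=\bigl((1-|a|)/(1-\overline{a}z)\bigr)^{\gamma}$, $\gamma=\gamma(\om,p)$ to be chosen. Since $|1-\overline{a}z|\asymp 1-|a|$ on $S(a)$ we have $|f_a|\asymp1$ there, whence $\mu(S(a))\lesssim\int_{S(a)}|f_a|^q\,d\mu\le\|I_d\|^{q}_{A^p_\om\to L^q_\mu}\|f_a\|_{A^p_\om}^q$; and splitting $\D$ into the annuli $R_n=\{2^{n}(1-|a|)\le|1-\overline{a}z|<2^{n+1}(1-|a|)\}\subset S(a_n)$, on which $|f_a|^p\asymp 2^{-\gamma p n}$, gives, by (A),
\[
\|f_a\|_{A^p_\om}^p=\sum_{n\ge0}\int_{R_n}|f_a|^p\om\,dA\lesssim\sum_{n\ge0}2^{-\gamma p n}\om(S(a_n))\lesssim\om(S(a))\sum_{n\ge0}(\lambda\,2^{-\gamma p})^{n}\lesssim\om(S(a))
\]
once $2^{\gamma p}>\lambda$. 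Taking the supremum over $a$ yields \eqref{eq:s1n}, and chaining the three estimates above gives $N\lesssim\|I_d\|^{q}_{A^p_\om\to L^q_\mu}\lesssim\bigl\|[M_\om((\cdot)^{1/\alpha})]^{\alpha}\bigr\|^{q}_{L^p_\om\to L^q_\mu}\lesssim N$, i.e. \eqref{equiva}.

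\medskip
\noindent\emph{Main obstacle.}
The crux is fact (B). Because $\om$ is non-radial and may vanish on a set of positive area measure, the customary reduction ``$\om(S)\asymp\om$ of a hyperbolic disc inscribed in $S$'' is unavailable, and a bare sub-mean value argument gives only an unweighted average of $|f|$; indeed, for an unrestricted weight the inequality $|f(z)|^{s}\om(S)\lesssim\int_S|f|^{s}\om\,dA$ is false, as one sees by taking $\om$ supported far, in the hyperbolic metric, from $z$ inside $S$. What rescues the argument is precisely that $\om/\nu\in B_{p_0}(\nu)$: the finiteness of the integrals $\int_S(\om/\nu)^{-p_0'/p_0}\nu$ forbids such degeneracy, and the $B_{p_0}(\nu)$ quantity together with the $\DD$-regularity of $\nu$ makes the corresponding local estimate uniform in $z$. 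Establishing (B) in the required generality — and likewise the geometric growth in (A), which underlies both the covering lemma for $M_\om$ and the test-function computation — is where essentially all the work lies, and it is exactly what confines the theorem to the class $\binfhat$.
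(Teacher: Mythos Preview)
Your cycle $(\mathrm{i})\Rightarrow(\mathrm{iii})\Rightarrow(\mathrm{ii})\Rightarrow(\mathrm{i})$ and the bookkeeping of constants match the paper's argument, and your treatments of $(\mathrm{iii})\Rightarrow(\mathrm{ii})$ (weak-type plus Marcinkiewicz, which the paper outsources to Proposition~\ref{co:maxbou} and \cite{PelRatEmb}) and of $(\mathrm{i})\Rightarrow(\mathrm{iii})$ (test functions and the dyadic growth estimate you call (A), which is Lemma~\ref{Lemma:test-functions-non-radial}(ii)/(iv) in the paper) are fine.

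The gap is in your proof of fact (B). You propose to start from the unweighted sub-mean value inequality on $\Delta(z)$ and then split by H\"older, producing a factor $\int_{\Delta(z)}\om^{-\beta}\,dA$. But $\om\in\binfhat$ is explicitly allowed to vanish on sets of positive Lebesgue measure (the paper stresses this feature of the class), and in that case $\int_{\Delta(z)}\om^{-\beta}\,dA=+\infty$; nothing in the $B_{p_0}(\nu)$ condition or in $\nu\in\DD$ bounds a \emph{Lebesgue}-weighted negative power of $\om$. What the hypothesis does control is the $\nu$-weighted quantity $\int_S(\om/\nu)^{-p_0'/p_0}\nu\,dA$, as you yourself note in the ``Main obstacle'' paragraph --- but that is not the factor your H\"older step actually produces.

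The paper's route to \eqref{eq:maxdooubling} avoids this by passing through $M_\nu$ rather than through the Lebesgue average. First, H\"older with respect to $\nu\,dA$ and the $B_{p_0}(\nu)$ bound give
\[
\frac{1}{\nu(S)}\int_S\varphi\,\nu\,dA
\le\frac{1}{\nu(S)}\Bigl(\int_S\varphi^{p_0}\om\,dA\Bigr)^{1/p_0}\Bigl(\int_S(\om/\nu)^{-p_0'/p_0}\nu\,dA\Bigr)^{1/p_0'}
\lesssim\Bigl(\frac{1}{\om(S)}\int_S\varphi^{p_0}\om\,dA\Bigr)^{1/p_0},
\]
so that $(M_\nu\varphi)^{p_0}\lesssim M_\om(\varphi^{p_0})$ on $\D$. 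Second, since $\nu\in\DD$ is \emph{radial}, the pointwise estimate $|f(z)|^{t}\lesssim M_\nu(|f|^{t})(z)$ is already known (the paper cites \cite[Lemma~3.2]{PelSum14}). Composing the two with $\varphi=|f|^{s/p_0}$ yields $|f(z)|^{s}\lesssim M_\om(|f|^{s})(z)$ for every $s>0$, which is exactly (B). So the missing idea is to route the sub-mean value step through the radial weight $\nu$ rather than through Lebesgue measure; once you do that, the H\"older factor that appears is the $B_{p_0}(\nu)$ quantity you correctly identified, and your proof goes through.
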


Theorem~\ref{th:cm} is a natural extension of \cite[Theorem~3.3]{PelSum14} and \cite[Theorem~2.1]{PelRat} to non-radial weights.

Second, it is needed a good understanding of the class of weights involved in  Theorems~\ref{th:LPII} and~\ref{th:LPI}. In particular, 
en route to the proofs, we show that 
	$$
	\om\in \mathcal{B}_\infty(\DDD)\Rightarrow \om_{[\beta]}(z)=(1-|z|)^\beta\om(z)\in \mathcal{B}_\infty(\DDD),\quad\text{ for any $\b>0$,}
	$$
a fact which might be deceptively simple-looking. Indeed, the class $\mathcal{B}_\infty(\DD)$ does not have this property. 
See \cite[Proposition~10]{PelRatHarmonic} for the construction of a radial weight   $\om\in\mathcal{B}_\infty(\DD)$
such that $\om_{[\beta]}\notin\mathcal{B}_\infty(\DD)$ for any $\b>0$.

The third key ingredient in the proofs of Theorems~\ref{th:LPII} and~\ref{th:LPI} concerns certain more smooth weights. Namely, each weight $\om$ induces the nonnegative average function 
	$$
	\widetilde{\om}(z)=\frac{\int_{S(z)}\om(\z)\,dA(\z)}{(1-|z|)^2},\quad z\in\D\setminus\{0\}.
	$$
Which regard to this function we prove the following result.
 
\begin{theorem}\label{th:averageweightII}
Let $0<p<\infty$, $k\in\N$ and $\om\in\mathcal{B}_\infty(\DDD)$. Then
	$$
	\|f\|^p_{A^p_\om}
	\asymp\|f\|^p_{A^p_{\widetilde{\om}}}
	\asymp\int_\D|f^{(k)}(z)|^p(1-|z|)^{kp}\widetilde{\om}(z)\,dA(z)+\sum_{j=0}^{k-1}|f^{(j)}(0)|^p, \quad f\in\H(\D).
	$$
\end{theorem}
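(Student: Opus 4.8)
The plan is to split the chain of equivalences into two halves: the norm comparison $\|f\|_{A^p_\om}\asymp\|f\|_{A^p_{\tom}}$, and then the Littlewood--Paley part $\|f\|_{A^p_{\tom}}^p\asymp\int_\D|f^{(k)}|^p(1-|z|)^{kp}\tom\,dA+\sum_{j<k}|f^{(j)}(0)|^p$. For the second half I would invoke Theorem~\ref{th:LPII}, once I know that $\tom\in\mathcal{B}_\infty(\DDD)$ whenever $\om\in\mathcal{B}_\infty(\DDD)$; this membership is one of the ``key ingredient'' facts announced in the introduction, and I would establish it by checking the $B_p(\nu)$ condition \eqref{bpdef} for $\tom/\nu$ on Carleson squares, using that averaging over $S(z)$ changes the relevant integrals over a Carleson square $S$ only by a bounded factor (a Fubini/covering argument: each point of a slightly enlarged square contributes $\om$-mass to $\tom$ at comparably many points of $S$, and $\om(S')\asymp\om(S)$ for comparable squares since $\om\in\DD$-type control propagates through $\mathcal{B}_\infty(\DDD)$). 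So the real work is the first half.

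For $\|f\|_{A^p_{\tom}}\lesssim\|f\|_{A^p_\om}$, the standard route is: by subharmonicity of $|f|^p$,
$|f(z)|^p\lesssim \frac{1}{(1-|z|)^2}\int_{\Delta(z)}|f(\z)|^p\,dA(\z)$
over a fixed pseudohyperbolic disc $\Delta(z)$, then multiply by $\tom(z)$, integrate, and swap the order of integration; the weight estimate one needs is $\int_{\{z:\,\z\in\Delta(z)\}}\frac{\tom(z)}{(1-|z|)^2}\,dA(z)\lesssim\om(\z)$ for a.e.\ $\z$, which in turn reduces to $\tom(z)\asymp\tom(\z)$ for $\z\in\Delta(z)$ — a regularity property of the averaged weight that holds because $\om(S(z))\asymp\om(S(\z))$ for pseudohyperbolically close points when $\om\in\mathcal{B}_\infty(\DDD)$ (the $\DD$/$\Dd$-type doubling built into that class). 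For the reverse inequality $\|f\|_{A^p_\om}\lesssim\|f\|_{A^p_{\tom}}$ I would argue that $\om(z)\,dA(z)$ is a $p$-Carleson measure for $A^p_{\tom}$ and apply Theorem~\ref{th:cm} (with $q=p$, $\mu=\om\,dA$, and $\tom$ in place of $\om$, which is legitimate once $\tom\in\binfhat$): the Carleson condition \eqref{eq:s1n} becomes $\sup_S \om(S)/\tom(S)<\infty$, and since $\tom(S)=\frac{1}{|S|}\int_S\left(\int_{S(\z)}\om\right)d\z\gtrsim \om(S')$ for a comparable square $S'\supset$ a fixed portion of $S$, while $\om(S')\asymp\om(S)$, this supremum is finite.

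The main obstacle I anticipate is the propagation of doubling through the non-radial class $\mathcal{B}_\infty(\DDD)$: all of the ``comparable Carleson squares have comparable $\om$-mass'' claims and the pointwise regularity $\tom(z)\asymp\tom(\z)$ on pseudohyperbolic discs rest on extracting, from $\om/\nu\in B_p(\nu)$ with $\nu\in\DDD$, a usable geometric doubling statement for $\om(S(\cdot))$ itself — the $B_p$ condition only controls a product of an integral and a dual integral, so one must combine it with the $\DDD$-properties of $\nu$ (which do give genuine two-sided doubling of $\widehat\nu$, hence of $\nu(S(a))$) via Hölder to transfer doubling to $\om$. I would isolate this as a preliminary lemma. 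Everything after that — the subharmonicity estimate, the Fubini interchange, and the two applications of Theorems~\ref{th:cm} and~\ref{th:LPII} — is routine, modulo keeping track that the exceptional point $0$ is handled by the $\sum_{j=0}^{k-1}|f^{(j)}(0)|^p$ terms and the assumption $\om(S(a))>0$.
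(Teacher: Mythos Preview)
Your argument has a genuine gap in the first half, and the second half rests on a claim whose proof is more delicate than you indicate.

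\medskip

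\textbf{The norm comparison.} You have the two tools assigned to the wrong directions. Starting from the subharmonicity estimate $|f(z)|^p\lesssim(1-|z|)^{-2}\int_{\Delta(z,r)}|f(\z)|^p\,dA(\z)$, if you multiply by $\om(z)$ (not $\tom(z)$) and swap the order of integration, the inner weight that appears is exactly $\om(\Delta(\z,r))/(1-|\z|)^2=\om_{h,r}(\z)$, which is $\asymp\tom(\z)$ for $\om\in\DDD(\D)$. So subharmonicity naturally yields $\|f\|_{A^p_\om}\lesssim\|f\|_{A^p_{\tom}}$, the \emph{opposite} inequality to the one you claim it gives. Your version --- multiplying by $\tom(z)$ and then asking for the pointwise bound $\int_{\{z:\z\in\Delta(z)\}}\tom(z)(1-|z|)^{-2}\,dA(z)\lesssim\om(\z)$ --- cannot hold: the left side is $\asymp\tom(\z)>0$ (by your own regularity observation), while the paper explicitly notes that $\mathcal{B}_\infty(\DDD)$ contains weights $\om$ vanishing on sets of positive area. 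The paper (Proposition~\ref{pr:averageweight}(i)) therefore uses subharmonicity for $\|f\|_{A^p_\om}\lesssim\|f\|_{A^p_{\om_{h,r}}}$ and the Carleson-measure route (Theorem~\ref{th:cm}, applied with the \emph{original} weight $\om$ and $\mu=\om_{h,r}\,dA$) for the reverse, after checking via Fubini and $\om\in\DD(\D)$ that $\om_{h,r}(S(a))\lesssim\om(S(a))$.

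\medskip

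\textbf{The Littlewood--Paley half.} Your plan to invoke Theorem~\ref{th:LPII} for $\tom$ requires $\tom\in\mathcal{B}_\infty(\DDD)$, and your sketch of this (``check \eqref{bpdef} for $\tom/\nu$ by Fubini/covering'') is not enough: the $B_p(\nu)$ condition contains the dual integral $\int_S(\tom/\nu)^{-p'/p}\nu\,dA$, a \emph{negative} power of an average, and no simple Fubini argument relates this to the corresponding integral for $\om$. The paper sidesteps this completely. It proves the lower bound $\|f\|_{A^p_\om}^p\lesssim\int_\D|f^{(k)}|^p(1-|z|)^{kp}\tom\,dA+\sum_{j<k}|f^{(j)}(0)|^p$ directly (Proposition~\ref{th:wide}) from a reproducing formula for $f-\sum_{j<k}f^{(j)}(0)$ in terms of $f^{(k)}$, combined with Lemma~\ref{Lemma:test-functions-non-radial}(iv) and, in the range $p>1$, a nontrivial preliminary showing $\om_{[-\e]}\in\DDD(\D)$ for small $\e>0$. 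The upper bound comes from Proposition~\ref{pr:averageweight}(ii). In the paper the membership $\tom\in B_\infty$ is a \emph{consequence} of Theorem~\ref{th:averageweightII} (via \cite[Corollary~4.4]{APR2}), not an input to it.
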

 
We emphasize that, under the hypothesis $\om\in\mathcal{B}_\infty(\DDD)$, the weights $\om$ and $\widetilde{\om}$ are not pointwise equivalent, but $\|f\|^p_{A^p_\om}\asymp\|f\|^p_{A^p_{\widetilde{\om}}}$ and  $\widetilde{\om}$ is essentially (or almost) constant in each hyperbolically bounded region. This together with the second equivalence in Theorem~\ref{th:averageweightII} and \cite[Corollary~4.4-Theorem~1.7]{APR2} implies that $\widetilde{\om}\in B_\infty$.
Therefore the study of certain type of questions on linear operators $T:\H(\D)\to\H(\D)$ on weighted Bergman spaces $A^p_\om$, with $\om\in \mathcal{B}_\infty(\DDD)$, can be reduced to the particular case $\om\in B_\infty$. We will make this statement precise in the case of some questions related to the integral operator $T_g$ induced by $g\in \H(\D)$. Indeed, Theorem~\ref{th:Tgnew} below describes the analytic symbols $g$ such that $T_g:A^p_\om\to A^q_\om$ is bounded or compact. In particular, it shows that $T_g: A^p_\om\to A^p_\om$ is bounded if and only if $g$ belongs to the classical space $\B$ of Bloch functions. Further, by using ideas from studies~\cite{AlCo,AlPe,APR2}, which link the resolvent set $\rho\left(T_g|A^p_\om\right)$ to the theory of weighted norms in terms of derivatives, we obtain the following characterization of $\rho\left(T_g|A^p_\om\right)$. 

\begin{theorem}\label{th:resolvent}
Let $\om\in\mathcal{B}_\infty(\DDD)$, $g\in\B$, $0<p<\infty$ and $\lambda\in\mathbb{C}\setminus\{0\}$. Then the following statements are equivalent:
	\begin{enumerate}
	\item[(i)] $\lambda\in\rho\left(T_g|A^p_\om\right)$; 
	\item[(ii)] $\displaystyle\|f\|_{A^p_{\om_{\lambda,g,p}}}^p\asymp |f(0)|^p+\int_\D|f'(z)|^p(1-|z|)^p\om_{\lambda,g,p}(z)\,dA(z)$ for all $f\in\H(\D)$, where $\om_{\lambda,g,p}=\om\exp\left( p \text{Re}\frac{g}{\lambda} \right)$;
	\item[(iii)] $\widetilde{\om}\exp\left(p\text{Re}\frac{g}{\lambda}\right)\in B_\infty$.
\end{enumerate}    
\end{theorem}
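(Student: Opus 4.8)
The plan is to establish the cycle $(i)\Leftrightarrow(ii)\Rightarrow(iii)\Rightarrow(ii)$, the heart of which is recognizing that $\lambda$ lies in the resolvent set precisely when $\lambda I - T_g$ is invertible, and that the equation $(\lambda I - T_g)f = h$ can be solved explicitly. Indeed, differentiating $(\lambda - T_g)f = h$ gives $\lambda f' - f g' = h'$, a linear first-order ODE whose solution is $f(z) = \frac{h(z)}{\lambda}\exp\!\bigl(\tfrac{g(z)-g(0)}{\lambda}\bigr) \cdot(\text{plus lower order terms from }h(0))$, up to the usual integrating-factor bookkeeping. The key observation — already exploited in \cite{AlCo,AlPe,APR2} — is that multiplication by $\exp(g/\lambda)$ sets up an isometric-type correspondence between $A^p_\om$ and $A^p_{\om_{\lambda,g,p}}$, since $|e^{g/\lambda}|^p = e^{p\,\mathrm{Re}(g/\lambda)} = \om_{\lambda,g,p}/\om$ pointwise. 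So $\lambda\in\rho(T_g|A^p_\om)$ if and only if the map $h\mapsto f$ above is bounded on $A^p_\om$, which, after transporting through the weight factor, is equivalent to the statement that the derivative-norm on the right side of (ii) controls (and is controlled by) $\|f\|_{A^p_{\om_{\lambda,g,p}}}^p$. This gives $(i)\Leftrightarrow(ii)$; one must be slightly careful that $g\in\B$ guarantees $\om_{\lambda,g,p}$ is still a genuine weight (integrable, with $\om_{\lambda,g,p}(S(a))>0$), which follows from $e^{p\,\mathrm{Re}(g/\lambda)}$ being bounded above and below on each Carleson square up to constants depending on the square — a standard consequence of the Bloch growth estimate $|g(z)-g(w)|\lesssim \|g\|_\B$ for $z,w$ in a fixed hyperbolic ball.

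For $(ii)\Leftrightarrow(iii)$ I would invoke the already-established Littlewood–Paley machinery. Statement (ii) is exactly the assertion that the weight $\om_{\lambda,g,p}$ satisfies the two-sided estimate \eqref{LP1} with $k=1$. By Theorem~\ref{th:LPII} together with \cite[Corollary~4.4]{APR2} (and the remark following Theorem~\ref{th:averageweightII}), the full two-sided \eqref{LP1} holds for a weight $\mu$ that is essentially constant on hyperbolically bounded regions if and only if $\mu\in B_\infty$. Now $\om_{\lambda,g,p} = \om\, e^{p\,\mathrm{Re}(g/\lambda)}$ is in general \emph{not} essentially constant on such regions (because $\om$ need not be), so I cannot apply that corollary directly to $\om_{\lambda,g,p}$ itself. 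The right move is to pass to the averaged weight: $\widetilde{\om_{\lambda,g,p}}$. Since $e^{p\,\mathrm{Re}(g/\lambda)}$ oscillates by at most a bounded factor on $S(z)$ (again by the Bloch estimate), one gets the pointwise comparison $\widetilde{\om_{\lambda,g,p}}(z)\asymp \widetilde{\om}(z)\, e^{p\,\mathrm{Re}(g(z)/\lambda)}$, i.e. $\widetilde{\om_{\lambda,g,p}}\asymp \widetilde{\om}\exp(p\,\mathrm{Re}(g/\lambda))$, which is precisely the weight appearing in (iii). Moreover $\widetilde{\om}$ is essentially constant on hyperbolically bounded regions (Theorem~\ref{th:averageweightII}), hence so is $\widetilde{\om}\exp(p\,\mathrm{Re}(g/\lambda))$. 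Therefore (iii) is equivalent to: the averaged weight $\widetilde{\om_{\lambda,g,p}}$ is essentially constant on hyperbolic regions \emph{and} lies in $B_\infty$, which by \cite[Corollary~4.4]{APR2} is equivalent to \eqref{LP1} holding for $\widetilde{\om_{\lambda,g,p}}$, and by the first equivalence in Theorem~\ref{th:averageweightII}-type reasoning (i.e. $\|f\|_{A^p_\mu}\asymp\|f\|_{A^p_{\widetilde\mu}}$ when $\mu\in\mathcal{B}_\infty(\DDD)$) this is equivalent to \eqref{LP1} for $\om_{\lambda,g,p}$, which is (ii) — provided $\om_{\lambda,g,p}\in\mathcal{B}_\infty(\DDD)$.

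That last proviso is where I expect the main obstacle, and it deserves its own lemma: I must show $\om\in\mathcal{B}_\infty(\DDD) \Rightarrow \om_{\lambda,g,p}=\om\exp(p\,\mathrm{Re}(g/\lambda))\in\mathcal{B}_\infty(\DDD)$ for every $g\in\B$ and $\lambda\ne 0$. The mechanism is that multiplying by $\exp(p\,\mathrm{Re}(g/\lambda))$ perturbs the relevant quantities only by a bounded factor on each Carleson square: writing $v = e^{p\,\mathrm{Re}(g/\lambda)}$, one has $\sup_{S}v / \inf_S v \le C(S)$, and more usefully $v(z)\asymp v(a)$ for $z\in S(a)$ with a \emph{uniform} constant — this is the crucial point and it is exactly the Bloch condition, since $\mathrm{Re}(g/\lambda)$ has bounded oscillation on each $S(a)$ (oscillation $\lesssim \|g\|_\B/|\lambda|$, independent of $a$). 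Consequently, in the $B_p(\nu)$-supremum defining membership in $\mathcal{B}_\infty(\DDD)$ (applied to $\om_{\lambda,g,p}/\nu$), the factor $v(a)$ pulls out of $\int_S(\om v/\nu)\nu\,dA$ and its conjugate exponent partner $\int_S(\om v/\nu)^{-p'/p}\nu\,dA$ as $v(a)^{1/p}$ and $v(a)^{-1/p}$ respectively, which cancel; so the $B_p(\nu)$ constant of $\om_{\lambda,g,p}$ is comparable to that of $\om$, with the \emph{same} $\nu\in\DDD$. (The companion fact $\om\in\mathcal{B}_\infty(\DDD)\Rightarrow\om_{[\beta]}\in\mathcal{B}_\infty(\DDD)$, flagged in the excerpt, is proved by an identical bounded-perturbation argument since $(1-|z|)^\beta\asymp(1-|a|)^\beta$ on $S(a)$, so one can cite whatever lemma delivers that.) Once this closure lemma is in hand, the chain above closes, and assembling $(i)\Leftrightarrow(ii)$ with $(ii)\Leftrightarrow(iii)$ finishes the proof. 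I would present the closure lemma first, then $(i)\Leftrightarrow(ii)$ via the ODE/integrating-factor identification, then $(ii)\Leftrightarrow(iii)$ via the averaging comparison and \cite[Corollary~4.4]{APR2}.
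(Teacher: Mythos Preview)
Your treatment of (i)$\Leftrightarrow$(ii) via the explicit resolvent and the correspondence $f\mapsto fe^{-g/\lambda}$ between $A^p_\om$ and $A^p_{\om_{\lambda,g,p}}$ matches the paper's route, which invokes \cite[Theorem~5.1]{AlCo} together with Theorems~\ref{th:LPII} and~\ref{th:Tgnew}.

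The gap is in (ii)$\Leftrightarrow$(iii), and it comes from conflating Carleson squares with hyperbolic balls. A Carleson square $S(a)$ has \emph{unbounded} hyperbolic diameter, so neither $e^{p\,\mathrm{Re}(g/\lambda)}$ nor $(1-|z|)^\beta$ is essentially constant on $S(a)$; your claim ``$v(z)\asymp v(a)$ for $z\in S(a)$ with a uniform constant'' already fails for $g(z)=\log\frac{1}{1-z}$, and the parallel claim $(1-|z|)^\beta\asymp(1-|a|)^\beta$ on $S(a)$ is equally false (the paper's proof that $\om_{[\beta]}\in\binfD$ is in fact nontrivial and relies on the $\Dd$-property). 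Your closure lemma $\om\in\binfD\Rightarrow\om_{\lambda,g,p}\in\binfD$ therefore has no proof, and in fact it cannot hold for all $\lambda\ne0$: if it did, Theorem~\ref{th:LPII} would force (ii) for every $\lambda$, hence $\sigma(T_g|A^p_\om)=\{0\}$ for every $g\in\B$, which is false already for standard weights.

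The paper avoids this by smoothing \emph{before} multiplying by the exponential. Theorem~\ref{th:averageweightII} gives $A^p_\om=A^p_{\tom}=A^p_W$ with equivalent norms, where $W\asymp\tom$ is differentiable with $|\nabla W(z)|\lesssim(1-|z|)^{-1}W(z)$ (\cite[Proposition~2.1]{APR2}); in particular $\rho(T_g|A^p_\om)=\rho(T_g|A^p_W)$. Rerunning the (i)$\Leftrightarrow$(ii) argument with $W$ in place of $\om$ reduces everything to the Littlewood--Paley formula for $W_{\lambda,g,p}=We^{p\,\mathrm{Re}(g/\lambda)}$. Now the Bloch estimate is used correctly: $e^{p\,\mathrm{Re}(g/\lambda)}$ has bounded oscillation on \emph{hyperbolic} balls, $W$ does too, so $W_{\lambda,g,p}$ inherits the same gradient bound and is essentially constant on hyperbolically bounded regions. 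For such weights \cite[Corollary~4.4]{APR2} applies directly and gives (iii). Your route requires $\|f\|_{A^p_{\om_{\lambda,g,p}}}\asymp\|f\|_{A^p_{\widetilde{\om_{\lambda,g,p}}}}$ without knowing $\om_{\lambda,g,p}\in\binfD$, and that is precisely what cannot be obtained in general.
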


The remaining part of the paper is organized as follows. In Section~\ref{sec:weights} we state and prove some preliminary results on weights.
Theorem~\ref{th:cm} is  proved in Section~\ref{sec:Carleson} while Section~\ref{sec:LP} is devoted to the proofs of Theorems~\ref{th:LPII},~\ref{th:LPI} and~\ref{th:averageweightII}. In Section~\ref{sec:integral} we discuss some basic properties of the integral operator $T_g$ acting on $A^p_\om$ and then prove Theorem~\ref{th:resolvent}.

Before proceeding further, a word about notation used. The letter $C=C(\cdot)$ will denote an absolute constant whose value depends on the parameters indicated in the parenthesis, and may change from one occurrence to another.
We will use the notation $a\lesssim b$ if there exists a constant
$C=C(\cdot)>0$ such that $a\le Cb$, and $a\gtrsim b$ is understood
in an analogous manner. In particular, if $a\lesssim b$ and
$a\gtrsim b$, then we write $a\asymp b$ and say that $a$ and $b$ are comparable. This notation has already been used above in the introduction.
 
\section{Basic properties and lemmas on weights}\label{sec:weights}

The pseudohyperbolic distance between two points $z_1$ and $z_2$ in $\D$ is $\rho(z_1,z_2)=\left|\frac{z_1-z_2}{1-\overline{z_1}z_2}\right|$. We say that a weight $\om$ is essentially constant on each hyperbolically bounded region if there exist constants $r\in(0,1)$ and $C=C(\om,r)$ such that
	$$
	C^{-1}\om(z_2)\le \om(z_1)\le C\om(z_2),\quad \rho(z_1,z_2)<r.
	$$ 
This class of weights coincides with the weights satisfying \cite[(1.6)]{APR2} and has been also considered in~\cite{BWZ}.

In the classical setting, there are many  equivalent conditions which describe
the Muckenhoupt class $A_\infty=\bigcup_{1\le q<\infty} A_q$, see \cite{DMOMathZ16}, \cite[Chapter~5]{SteinHarmonic93} or
\cite[p.~149]{Duandilibro}. However, this is no longer true for the class $B_\infty$,
 that is, the corresponding conditions (defined on Carleson squares) do not coincide. This stems from the fact that 
$B_\infty$-weights do not have the reverse H\"older property \cite{APR2,BorichevMathAnn} .
It is worth mentioning
that our definition of the class $B_\infty$ differs from the one provided in \cite[(1.4)]{APR2}. However,
this does not cause any trouble because $B_\infty$-weights, which are are essentially constant in each hyperbolically bounded region,
can be described in terms of a good number of conditions \cite[Theorem~1.7]{APR2}, and in particular the definitions
coincide under this extra hypotheses on the weight.

The main results of this paper are established under the hypothesis $\om\in\mathcal{B}(\DDD)$. Therefore we are interested in looking for neat conditions describing the class $\mathcal{B}_\infty(\nu)$ induced by $\nu\in\DDD$. In order to do this, for each weight $\nu$, we say that a weight $\om$ has the Kerman-Torchinsky $KT(\nu)$-property if there exist constants $\delta\in (0,1)$ and $C>0$ such that 
	\begin{equation}\label{eq:KT}
	\frac{\nu(E)}{\nu(S)}\le C\left( \frac{\om(E)}{\om(S)}\right)^{\delta}
	\end{equation}
for all Carleson squares $S\subset\D$ and measurable sets $E\subset S$. Here and from now on we write $\om(E)=\int_E \om(z)\,dA(z)$. If we replace $\nu$ by the Lebesgue measure in $\mathbb{R}^n$ and Carleson squares by cubes $Q$ in \eqref{eq:KT}, we obtain a condition which describes the class $A_\infty$ of the classical Muckenhoupt weights~\cite[Theorem~3.1]{DMOMathZ16}. This condition was introduced by Kerman and Torchinsky~\cite[Proposition~1]{KT} in order to describe the Hardy-Littlewood maximal operators that are of restricted weak-type. The next result  follows from \cite[Theorem~3.1(c)]{DMOMathZ16} (which holds for general bases).

\begin{letterproposition}
Let $\nu$ be a weight. Then a weight $\om$ belongs to $\mathcal{B}_\infty(\nu)$ if and only if it has the $KT(\nu)$-property.
\end{letterproposition}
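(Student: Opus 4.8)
The plan is to deduce this proposition directly from the general machinery of \cite[Theorem~3.1]{DMOMathZ16}, which characterizes $A_\infty$-type classes relative to an arbitrary basis of sets. The key observation is that the family $\SSS$ of all Carleson squares $S\subset\D$ forms a Muckenhoupt basis in the abstract sense required there, and that the class $\mathcal{B}_p(\nu)$ defined by \eqref{bpdef} is precisely the $A_p$-class associated to this basis when one replaces Lebesgue measure by the measure $d\nu$ and divides weights by $\nu$. Concretely, one checks that $\om\in\mathcal{B}_p(\nu)$ if and only if $\om/\nu$ satisfies the $A_p$ condition
	$$
	\sup_{S\in\SSS}\left(\frac{1}{\nu(S)}\int_S \frac{\om}{\nu}\,d\nu\right)\left(\frac{1}{\nu(S)}\int_S\left(\frac{\om}{\nu}\right)^{-\frac{1}{p-1}}d\nu\right)^{p-1}<\infty,
	$$
which is exactly \eqref{bpdef} after rewriting $\frac{\om}{\nu}\,d\nu=\om\nu\,dA$ and $\left(\frac{\om}{\nu}\right)^{-\frac{p'}{p}}d\nu=\om^{-\frac{p'}{p}}\nu\,dA$. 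Hence $\om\in\mathcal{B}_\infty(\nu)=\bigcup_{1<p<\infty}\{\om:\om/\nu\in B_p(\nu)\}$ is the $A_\infty$ class for the basis $\SSS$ with underlying measure $d\nu$.

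Next I would recall the precise statement of \cite[Theorem~3.1(c)]{DMOMathZ16}: for a general basis, the $A_\infty$ class (the union of the $A_p$ classes over $1<p<\infty$) coincides with the class of weights $u$ for which there exist $\delta\in(0,1)$ and $C>0$ such that $\frac{|E|}{|S|}\le C\left(\frac{u(E)}{u(S)}\right)^\delta$ for every set $S$ in the basis and every measurable $E\subset S$, where $|\cdot|$ and $u(\cdot)$ denote the relevant underlying and weighted measures. Translating this to our setting: the "underlying measure" is $d\nu$, so $|S|$ becomes $\nu(S)$ and $|E|$ becomes $\nu(E)$; the weight is $u=\om/\nu$, so $u(E)=\int_E\frac{\om}{\nu}\,d\nu=\om(E)$ and likewise $u(S)=\om(S)$. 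The abstract condition therefore reads exactly
	$$
	\frac{\nu(E)}{\nu(S)}\le C\left(\frac{\om(E)}{\om(S)}\right)^\delta,
	$$
which is the $KT(\nu)$-property \eqref{eq:KT}. This gives the equivalence in both directions at once.

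The main point requiring care — and the step I would expect to be the principal obstacle — is verifying that the basis of Carleson squares genuinely satisfies the hypotheses under which \cite[Theorem~3.1]{DMOMathZ16} is valid (their results are stated for bases with suitable covering/Besicovitch-type properties, and one must confirm that $\SSS$, together with the measure $d\nu$, meets them; in particular one needs that the relevant weak-type maximal operator over Carleson squares is well-behaved and that a Calderón–Zygmund-type decomposition adapted to $\SSS$ is available). The standard way to handle this is to invoke the well-known dyadic-type structure of Carleson squares: every Carleson square is, up to bounded overlap, comparable to a finite union of "dyadic" Carleson squares, and these form a nicely nested family. Once this structural lemma is in place, the abstract theorem applies verbatim and the proposition follows. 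I would also note that no regularity assumption on $\nu$ (such as $\nu\in\DDD$) is needed here — the statement is purely about the Carleson-square geometry — so the proposition is stated, correctly, for an arbitrary weight $\nu$, and it is only in later applications that one specializes to $\nu\in\DDD$.
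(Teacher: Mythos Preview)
Your approach is exactly the paper's: the proposition is recorded as a direct consequence of \cite[Theorem~3.1(c)]{DMOMathZ16}, which, as the authors note parenthetically, holds for \emph{general} bases---no covering or Besicovitch-type hypotheses on the family of Carleson squares need to be verified, so the paragraph you flag as the ``principal obstacle'' is unnecessary. One small slip in your dictionary: $\frac{\om}{\nu}\,d\nu=\om\,dA$ (not $\om\nu\,dA$) and $\left(\frac{\om}{\nu}\right)^{-p'/p}d\nu=\om^{-p'/p}\nu^{p'}\,dA$; with this correction the abstract $A_p$ condition matches \eqref{bpdef} applied to $\om/\nu$, which is exactly what membership in $\mathcal{B}_\infty(\nu)$ demands.
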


The $K$-top of a Carleson box $S(a)$ is the polar rectangle $T_K(a)=\{re^{it}:\,e^{it}\in I_a,\,
|a|\le r< 1- \frac{1-|a|}{K}\}$. In some of the auxiliary results obtained en route to the main theorems the conditions $\mathcal{B}(\DDD)$ and $\mathcal{B}(\DD)$ can be relaxed in the sense that \eqref{eq:KT} is only needed for $K$-tops or their complements $S\setminus T_K$. To be precise, we write $\om\in\DD(\D)$ if there exists $C=C(\om)>0$ such that $\om(S(a))\le C\om(S(\frac{1+|a|}{2}e^{i\arg a}))$ for all $a\in\D\setminus\{0\}$. It is easy to see that each $\om\in\DD(\D)$ satisfies $\om(S(a'))\le C(C+1)\om(S(a))$ for all $a,a'\in\D\setminus\{0\}$ with $|a'|=|a|$ and $\arg a'=\arg a\pm(1-|a|)$. Therefore $\om(S(a))\lesssim\om(S(b))$ whenever $|b|=\frac{1+|a|}{2}$ and $S(b)\subset S(a)$. It is also obvious that radial weights in $\DD(\D)$ form the class $\DD$, which plays a crucial role in the operator theory of Bergman spaces induced by radial weights 
\cite{PelaezRattya2019}.	Further, a weight $\om$ on $\D$ belongs to $\Dd(\D)$ if there exist $K=K(\om)>1$ and $C=C(\om)>1$ such that
		\begin{equation}\label{eq:defDd}
		\om(S(a))\le C\om(T_K(a)),\quad a \in\D\setminus\{0\}. 
		\end{equation}	  
It is clear that radial weights in $\Dd(\D)$ form the class $\Dd$. Finally, we write $\DDD(\D)=\DD(\D)\cap\Dd(\D)$ for short.

In view of the above we have $\mathcal{B}_\infty(\DD)\subset \DD(\D)$, $\mathcal{B}_\infty(\Dd)\subset \Dd(\D)$ and
$\mathcal{B}_\infty(\DDD)\subset \DDD(\D)$. These embeddings, which will be used repeatedly throughout the paper, can also be proved
by straightforward calculations which show that $\om\in\DD(\D)$ (resp. $\om\in\Dd(\D)$) if $\nu\in\DD(\D)$ (resp. $\nu\in\Dd(\D)$) and $\om\in\binfnu$. Therefore $\DD$ and $\DDD$ coincide with the radial weights in $\binfhat$ and $\binfD$, respectively. However, $\binfhat\subsetneq\DD$. Namely, let 
\begin{equation*}\label{eq:gammadeu}\index{$\Gamma(u)$}
    \Gamma(\zeta)=\left\{z\in \D:\,|\arg\z-\arg
    z|<\frac{1}{2}\left(1-|z|\right)\right\},\quad
    \zeta\in\partial\D,
    \end{equation*}
and consider the weight $\om=\chi_{\D\setminus\Gamma(1)}$. Then $\om\notin\binfhat$ as is seen by considering the Carleson squares $S(a)$ induced by $a\in(0,1)$. But obviously there exists a constant $C>0$ such that $\om(S_a)\ge C|S_a|$ for all $a\in\D\setminus\{0\}$, and thus $\om\in\DD(\D)$.

The proof of the following result concerning the class $\DD(\D)$ can be found in \cite[Lemma~14]{KorhonenRattya2018}.

\begin{letterlemma}\label{Lemma:test-functions-non-radial}
Let $\om$ be a weight on $\D$. Then the following statements are equivalent:
    \begin{itemize}
    \item[\rm(i)] $\om\in\DD(\D)$;
    \item[\rm(ii)] there exist $\b=\b(\om)>0$ and $C=C(\om)\ge1$ such that
        $$
        \frac{\om(S(a))}{(1-|a|)^\b}\le C\frac{\om(S(a'))}{(1-|a'|)^\b},\quad 0<|a|\le|a'|<1,\quad\arg a=\arg a';
        $$
    \item[\rm(iii)] for some (equivalently for each) $K>0$ there exists $C=C(\om,K)>0$ such that
        $$
        \om(S(a))\le C\om\left(S\left(\frac{K+|a|}{K+1}e^{i\arg a}\right)\right),\quad a\in\D\setminus\{0\};
        $$
    \item[\rm(iv)] there exist $\eta=\eta(\om)>0$ and $C=C(\eta,\om)>0$ such that
        $$
        \displaystyle
        \int_\D\frac{\om(z)}{|1-\overline{a}z|^\eta}\,dA(z)\le C\frac{\om(S(a))}{(1-|a|)^\eta},\quad a\in\D\setminus\{0\}.
        $$
    \end{itemize}
\end{letterlemma}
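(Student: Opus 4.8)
The plan is to prove the cycle (i)$\Rightarrow$(ii)$\Rightarrow$(iii)$\Rightarrow$(i) and, separately, (i)$\Rightarrow$(iv)$\Rightarrow$(i); together these give all four equivalences and, along the way, that (iii) for one value of $K$ is equivalent to (iii) for every $K$. Since the class $\DD(\D)$ and the point-dependent conditions (ii)--(iv) are invariant under rotations of $\D$, in each argument we may fix the relevant point and assume its argument is $0$; we abbreviate $h=1-|a|$. For (i)$\Rightarrow$(ii) I would iterate the doubling condition along $a^{(0)}=a$, $1-|a^{(n+1)}|=\tfrac12(1-|a^{(n)}|)$, to get $\om(S(a))\le C^{n}\om(S(a^{(n)}))$ with $1-|a^{(n)}|=2^{-n}h$: given $a'$ with $\arg a'=0$ and $|a|\le|a'|<1$, the least $n$ with $|a^{(n)}|\ge|a'|$ satisfies $S(a^{(n)})\subset S(a')$ and $2^{n-1}<h/(1-|a'|)\le 2^{n}$, which gives (ii) with $\beta=\log_2 C$. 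The implication (ii)$\Rightarrow$(iii) is immediate on applying (ii) to $a'=\frac{K+|a|}{K+1}e^{i\arg a}$, for which $|a'|\ge|a|$ and $1-|a'|=h/(K+1)$, and it works for every $K>0$; conversely, (iii) for some $K_{0}>0$ yields (i) after iterating it finitely many times (choosing $n$ with $(K_{0}+1)^{n}\ge 2$) and invoking $\om(S(b))\le\om(S(b'))$ whenever $S(b)\subset S(b')$. This part is routine.

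The substantive implication is (i)$\Rightarrow$(iv). I would fix $\eta$ strictly larger than the exponent $\beta$ provided by (ii) (and a bit larger still; see below), observe that $h\le|1-\overline{a}z|<2$ on $\D$, and split $\D=\bigcup_{n\ge1}R_{n}$ with $R_{n}=\{z\in\D:2^{n-1}h\le|1-\overline{a}z|<2^{n}h\}$. When $|a|\ge\tfrac12$, the elementary comparison $|1-\overline{a}z|\asymp(1-|a|)+(1-|z|)+|\arg z-\arg a|$, with the last term measured on the unit circle, shows that each sublevel set $\{|1-\overline{a}z|<\delta\}$ with $h\le\delta\le c_{0}$ lies in a Carleson square $S(b_{\delta})$ with $\arg b_{\delta}=0$ and $1-|b_{\delta}|\asymp\delta$; hence $\om(R_{n})\le\om(S(b_{2^{n}h}))\lesssim 2^{n\beta}\om(S(a))$ by (ii) whenever $2^{n}h\le c_{0}$, and summing $\sum 2^{n(\beta-\eta)}$ over those $n$ gives a contribution $\lesssim\om(S(a))h^{-\eta}$ (the series converges because $\eta>\beta$). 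On the complementary ``bulk'' region, where $|1-\overline{a}z|\gtrsim 1$ (and into which the case $|a|<\tfrac12$ also falls, since there $h\asymp|1-\overline{a}z|\asymp 1$), the integral is $\lesssim\om(\D)$, so it remains to prove the auxiliary bound $\om(\D)\lesssim(1-|a|)^{-c}\om(S(a))$ for some $c=c(\om)>0$. For this I would first establish, by iterating the doubling condition toward the boundary, translating the argument in steps of size $1-|b|$ via the estimate $\om(S(a'))\le C(C+1)\om(S(a))$ valid for angular shifts $\arg a'=\arg a\pm(1-|a|)$ with $|a'|=|a|$, and then iterating back toward the centre, that $\om(S(a'))\lesssim(1-|a|)^{-\beta}\om(S(a))$ for \emph{every} $a'$ with $|a'|=|a|$; then cover $\{|z|\ge|a|\}$ by $\lesssim 1/h$ such squares, cover $\{\varepsilon\le|z|<|a|\}$ by $\lesssim 1/h$ Carleson squares of modulus $\varepsilon$ and compare each of them to $S(a)$ through (ii) and the previous step, and let $\varepsilon\to 0^{+}$, so that $\om(\{|z|<|a|\})$ is recovered by monotone convergence. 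This gives the bulk bound with, say, $c=2\beta+1$; choosing $\eta>\max\{\beta,2\beta+1\}$ then completes the proof of (iv).

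For (iv)$\Rightarrow$(i) I would put $b=\frac{1+|a|}{2}e^{i\arg a}$, so $1-|b|=\tfrac12h$, and note that for $z\in S(a)$ one has $1-|z|\le h$ and $|\arg z-\arg a|\le\tfrac12h$, whence $|1-\overline{b}z|\le 2h\asymp 1-|b|$; restricting the inequality in (iv), applied at the point $b$, to the subdomain $S(a)$ then gives
	$$
	\frac{\om(S(a))}{(1-|b|)^{\eta}}\lesssim\int_{S(a)}\frac{\om(z)}{|1-\overline{b}z|^{\eta}}\,dA(z)\le\int_{\D}\frac{\om(z)}{|1-\overline{b}z|^{\eta}}\,dA(z)\lesssim\frac{\om(S(b))}{(1-|b|)^{\eta}},
	$$
that is, $\om(S(a))\lesssim\om(S(b))$, which is (i). I expect the main obstacle to be the auxiliary bound $\om(\D)\lesssim(1-|a|)^{-c}\om(S(a))$ inside (i)$\Rightarrow$(iv): it amounts to controlling the $\om$-mass of Carleson squares at a fixed modulus but arbitrary argument, which forces one to interleave the radial doubling with the angular-shift estimate $\om(S(a'))\le C(C+1)\om(S(a))$ in a way that keeps the resulting constant polynomial (rather than exponential) in $1/(1-|a|)$, and then to dispose of the disc about the origin by a limiting argument. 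The iterations yielding (i)$\Leftrightarrow$(ii)$\Leftrightarrow$(iii), the two geometric sums in (i)$\Rightarrow$(iv), and the restriction step in (iv)$\Rightarrow$(i) are all routine.
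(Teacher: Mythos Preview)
The paper does not give its own proof of this lemma; it simply records that the argument can be found in \cite[Lemma~14]{KorhonenRattya2018}. So there is nothing in the present paper to compare your proposal against. That said, your outline is correct and is essentially the standard route to the result. The iterations for (i)$\Leftrightarrow$(ii)$\Leftrightarrow$(iii) and the restriction argument for (iv)$\Rightarrow$(i) are fine as written. For (i)$\Rightarrow$(iv), the dyadic decomposition of $\{z:|1-\overline{a}z|<c_0\}$ together with (ii) gives the convergent geometric series, and the bulk estimate $\om(\D)\lesssim(1-|a|)^{-c}\om(S(a))$ is exactly what is needed to close the argument.

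Two small remarks on the auxiliary bound. First, your verbal description of how to obtain $\om(S(a'))\lesssim(1-|a|)^{-\beta}\om(S(a))$ for every $a'$ with $|a'|=|a|$ is slightly garbled; the clean way is to fix a small $\varepsilon\in(0,|a|)$, use the containment $S(a')\subset S(\varepsilon e^{i\arg a'})$, then perform the angular shifts at radius $\varepsilon$ (only $O(1)$ of them since $1-\varepsilon\gtrsim1$), and finally apply (ii) from $\varepsilon e^{i\arg a}$ to $a$. Second, once you have this, you can cover $\{|z|\ge\varepsilon\}$ by a \emph{bounded} number of Carleson squares of modulus $\varepsilon$ (not $\lesssim1/h$), compare each to $S(a)$ as above, and let $\varepsilon\to0^{+}$; this already gives $\om(\D)\lesssim(1-|a|)^{-\beta}\om(S(a))$, so $c=\beta$ suffices and one may take any $\eta>\beta$. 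Your value $c=2\beta+1$ is harmless but larger than needed.
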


The following lemma gives an analogue of Lemma~\ref{Lemma:test-functions-non-radial}(ii) for weights in $\Dd(\D)$.

\begin{lemma}\label{le:Dd}
Let $\om$ be a weight on $\D$.
Then $\om\in\Dd(\D)$ if and only if there exist $K=K(\om)>1$ and $\beta_0=\beta_0(\om)>0$ such that
	\begin{equation}\label{Eq:d-check-characterization}
	\om(S(a))\ge\left(\frac{1-|a|}{1-|b|}\right)^\b\om\left(S(a)\setminus D(0,|b|)\right),\quad 1-\frac{1-|a|}{K}\le|b|<1,
	\end{equation}
for all $0<\b\le\b_0$ and $a\in\D\setminus\{0\}$.
\end{lemma}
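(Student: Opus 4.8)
The two implications are of quite different natures, so I would treat them separately. The implication \eqref{Eq:d-check-characterization}$\,\Rightarrow\,\om\in\Dd(\D)$ is immediate: taking $\b=\b_0$ and $|b|=1-\frac{1-|a|}{K}$, so that $1-|b|=\frac{1-|a|}{K}$, the inequality reads $\om(S(a))\ge K^{\b_0}\,\om(S(a)\setminus D(0,|b|))$; since, up to a set of zero area measure, $S(a)$ is the disjoint union of $T_K(a)$ and $S(a)\setminus D(0,|b|)$, we have $\om(S(a)\setminus D(0,|b|))=\om(S(a))-\om(T_K(a))$, and solving for $\om(S(a))$ gives $\om(S(a))\le(1-K^{-\b_0})^{-1}\om(T_K(a))$, which is \eqref{eq:defDd}. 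For the converse I would fix $a\in\D\setminus\{0\}$, let $K$ and $C$ be the constants furnished by $\om\in\Dd(\D)$, write $t=1-|a|$, $r_n=1-t/K^n$ and
$$
R_n=S(a)\setminus D(0,r_n)=\{z\in\D:\ e^{i\arg z}\in I_a,\ |z|\ge r_n\},\qquad n\ge0,
$$
so that $R_0=S(a)$ and $R_0\supset R_1\supset\cdots$.

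The crux is the geometric estimate $\om(R_n)\le\lambda\,\om(R_{n-1})$ for all $n\ge1$, with a constant $\lambda=\lambda(\om)\in(0,1)$. To prove it, cover the arc $I_a$ by arcs $I_{c_j}$, $j=1,\dots,M_n$, each of length $1-r_{n-1}=t/K^{n-1}$ (hence with $|c_j|=r_{n-1}$), arranged so that $I_{c_j}\subset I_a$ and the overlap $\sum_j\chi_{I_{c_j}}$ is bounded by an absolute constant $N$; this is possible because $1-r_{n-1}\le t=|I_a|$, and it forces $M_n\lesssim K^{n-1}$. The reason for anchoring the covering squares at the level $r_{n-1}$ rather than $r_n$ is that then the outer radius of each $K$-top $T_K(c_j)$ equals $1-\tfrac{1-r_{n-1}}{K}=r_n$, so $T_K(c_j)$ lies in the annular strip $R_{n-1}\setminus R_n=\{z:\ e^{i\arg z}\in I_a,\ r_{n-1}\le|z|<r_n\}$, whereas squares anchored at $r_n$ would have their $K$-tops above $R_n$. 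Since $R_{n-1}\subset\bigcup_jS(c_j)$, since $\om\in\Dd(\D)$ gives $\om(S(c_j))\le C\,\om(T_K(c_j))$, and since the $T_K(c_j)$ have overlap at most $N$, we obtain
$$
\om(R_n)\le\om(R_{n-1})\le\sum_{j=1}^{M_n}\om(S(c_j))\le C\sum_{j=1}^{M_n}\om(T_K(c_j))\le CN\,\om(R_{n-1}\setminus R_n)=CN\bigl(\om(R_{n-1})-\om(R_n)\bigr),
$$
and solving for $\om(R_n)$ yields the claim with $\lambda=\frac{CN}{CN+1}$.

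Iterating the claim gives $\om(R_n)\le\lambda^n\om(S(a))$ for every $n\ge0$. Now fix $b$ with $1-\frac{1-|a|}{K}\le|b|<1$, i.e. $r_1\le|b|<1$, and choose the unique $n\ge1$ with $r_n\le|b|<r_{n+1}$. Then $S(a)\setminus D(0,|b|)\subset R_n$, so $\om(S(a)\setminus D(0,|b|))\le\lambda^n\om(S(a))$, whereas $1-|b|>t/K^{n+1}$ gives $\bigl(\tfrac{1-|b|}{1-|a|}\bigr)^{\b}>K^{-(n+1)\b}$. Consequently \eqref{Eq:d-check-characterization} holds once $\lambda^n\le K^{-(n+1)\b}$ for all $n\ge1$, i.e. once $\b\le\frac{n}{n+1}\cdot\frac{\log(1/\lambda)}{\log K}$ for all $n\ge1$; since $\frac{n}{n+1}\ge\frac12$, the choice $\b_0:=\frac{\log(1/\lambda)}{2\log K}>0$ works, and for $0<\b\le\b_0$ the right-hand side of \eqref{Eq:d-check-characterization} only increases, so the inequality persists. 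The only delicate point in the whole argument is the covering step: one must simultaneously keep the $I_{c_j}$ inside $I_a$, keep their number comparable to $K^{n-1}$, keep their overlap bounded, and --- crucially --- anchor them one radial level below $R_n$ so that their $K$-tops fall exactly into the strip $R_{n-1}\setminus R_n$; once this is arranged, the rest is the standard ``doubling on Carleson squares propagates to a power estimate on truncations'' iteration, entirely parallel to Lemma~\ref{Lemma:test-functions-non-radial}(ii).
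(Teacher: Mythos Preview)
Your proof is correct and follows essentially the same strategy as the paper's: both establish the geometric decay $\om(S(a)\setminus D(0,r_n))\le\lambda^n\om(S(a))$ with $r_n=1-(1-|a|)/K^n$ by iterating the $\Dd(\D)$ condition on sub-Carleson-squares at each level, and then convert this into the power-law estimate with $\beta_0=\tfrac12\log_K(1/\lambda)$. The only technical difference is that the paper assumes without loss of generality that $K\in\N$ and partitions $S(a)\setminus T_{K^{n-1}}(a)$ \emph{exactly} into $K^{n-1}$ Carleson squares (so no overlap constant is needed and one gets $\lambda=1/C$ directly), whereas you use a bounded-overlap cover valid for arbitrary $K>1$ at the cost of the extra factor $N$ in $\lambda$.
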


\begin{proof}
First observe that $\om\in\Dd(\D)$ if and only if there exist $C=C(\om)>1$ and $K=K(\om)>1$ such that 
	\begin{equation}\label{Eq:d-check-characterization-trivial}
	\om(S(a))\ge C\om\left(S(a)\setminus T_K(a)\right),\quad a\in\D\setminus\{0\}.
	\end{equation}
This is the characterization that we will use to prove the lemma. 

The choice $b=1-\frac{1-|a|}{K}$ in \eqref{Eq:d-check-characterization} implies \eqref{Eq:d-check-characterization-trivial} with $C=K^\b$, and therefore $\om\in\Dd(\D)$. To prove the converse implication, assume without loss of generality that $K\in\N$. Now divide $S(a)\setminus T_K(a)$ into $K$ Carleson squares of equal size and apply \eqref{Eq:d-check-characterization-trivial} to each square to obtain $\om(S(a))\ge C^2\om\left(S(a)\setminus T_{K^2}(a)\right)$ for all $a\in\D\setminus\{0\}$. Then divide $S(a)\setminus T_{K^2}(a)$ into $K^2$ squares and proceed. After $1+K+K^2+\cdots+K^{n-1}$ applications of \eqref{Eq:d-check-characterization-trivial} we obtain
	\begin{equation}\label{himppu}
	\om(S(a))\ge C^n\om\left(S(a)\setminus T_{K^n}(a)\right),\quad a\in\D\setminus\{0\}.
	\end{equation}
Now, for given $1-\frac{1-|a|}{K}\le|b|<1$, pick up $n=n(a,b)\in\N$ such that 
	$$
	1-\frac{1-|a|}{K^n}\le|b|<1-\frac{1-|a|}{K^{n+1}}\quad\Longleftrightarrow\quad K^n\le\frac{1-|a|}{1-|b|}<K^{n+1}.
	$$
Then \eqref{himppu} yields
	\begin{equation*}
	\begin{split}
	\om(S(a))
	&\ge K^{n\log_K C}\om\left(S(a)\setminus T_{K^n}(a)\right)
	>\left(\frac{1-|a|}{1-|b|}\right)^{\frac{n}{n+1}\log_KC}\om\left(S(a)\setminus T_{\frac{1-|a|}{1-|b|}}(a)\right)
    \\ & \ge \left(\frac{1-|a|}{1-|b|}\right)^{
   \frac{1}{2}\log_KC }\om\left(S(a)\setminus D(0,|b|)\right),
	\end{split}
	\end{equation*}
which gives \eqref{Eq:d-check-characterization} for $\beta_0=\frac{1}{2}\log_KC$.
\end{proof}
 
For any $\epsilon\in (0,1)$, a simple computation shows that the weight	
		\begin{equation*}
  W(re^{i\t})=\left\{
        \begin{array}{cl}
        \frac{1}{(1-r)^{1-\frac{\epsilon}{2}}|\theta|^{1-{\frac{\epsilon}{2}}}},&\quad \theta\neq 0\\
        1,&\quad\theta=0,
        \end{array}\right.
  \end{equation*}
is a Bekoll\'e-Bonami type weight such that $W(S(a))\asymp(1-a)^{\epsilon}$ if $a\in(0,1)$ which in particular implies that $H^p\nsubset A^p_W$ for any $0<p<\infty$ by the classical Carleson embedding theorem. Now, let us compare this example with Theorem~\ref{th:averageweightII}, which in particular says that $\tilde{\om}$ is a weight whenever $\om\in \binfD$.
This example says that despite the fact $\tilde{\om}$ is a weight if $\om\in\binfD$, given $\e>0$, there are $\om\in\binfD$ and a set $A\subset \D$ with $|A|=0$ 
and $\bar{A}\cup\partial\D\neq \emptyset$,
 such that $\om(S(a))\asymp (1-|a)^\epsilon$, as  $a\in A$ and $|a|\to 1^-$.

\section{Carleson measures}\label{sec:Carleson}

Let $X$ be a quasi-Banach space of analytic functions on~$\D$. A positive Borel measure~$\mu$ on~$\D$ is called a $q$-Carleson measure for~$X$
if the identity operator $I_d:X\to L^q_\mu$ is bounded. Moreover, if $I_d:X\to L^q_\mu$ is compact, then $\mu$ is a $q$-vanishing Carleson measure for $X$.

We begin with the boundedness of the H\"ormander-type maximal function on $L^p_\om$ when $\om\in\DD(\D)$.

\begin{proposition}\label{co:maxbou}
Let $0<p\le q<\infty$ and $0<\alpha<\infty$ such that $p\alpha>1$. Let $\om\in\DD(\D)$ and let $\mu$ be a positive Borel measure on~$\D$.  Then $[M_{\om}((\cdot)^{\frac{1}{\alpha}})]^{\alpha}:L^p_\omega\to
L^q_\mu$ is bounded if and only if $\mu$ satisfies \eqref{eq:s1n}.
Moreover,
    $$
    \|[M_{\om}((\cdot)^{\frac{1}{\alpha}})]^{\alpha}\|^q_{L^p_\om\to L^q_\mu}
		\asymp\sup_{S}\frac{\mu\left(S\right)}{\left(\om\left(S\right)\right)^\frac{q}p}.
    $$
\end{proposition}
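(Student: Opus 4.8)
The plan is to prove the equivalence in two directions, with the main work going into the ``maximal function bounded $\Rightarrow$ testing condition'' part already being comparatively easy, and the substantial content being the converse together with a weak-type estimate for $M_\om$.

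\textbf{Necessity of \eqref{eq:s1n}.} Fix a Carleson square $S=S(a)$ and test the boundedness of $[M_\om((\cdot)^{1/\alpha})]^\alpha$ against $\vp=\chi_S$. For every $z\in S$ one has $z\in S$ as an admissible square in the supremum defining $M_\om$, so $M_\om(\chi_S^{1/\alpha})(z)=M_\om(\chi_S)(z)\ge \frac{1}{\om(S)}\int_S\chi_S\om\,dA=1$, whence $[M_\om(\chi_S^{1/\alpha})]^\alpha\ge \chi_S$ pointwise. Therefore
	$$
	\mu(S)\le\int_\D[M_\om(\chi_S^{1/\alpha})]^{\alpha q}\,d\mu
	\le \|[M_\om((\cdot)^{1/\alpha})]^\alpha\|_{L^p_\om\to L^q_\mu}^q\,\|\chi_S\|_{L^p_\om}^q
	=\|[M_\om((\cdot)^{1/\alpha})]^\alpha\|_{L^p_\om\to L^q_\mu}^q\,\om(S)^{q/p},
	$$
which is \eqref{eq:s1n} with the correct direction of the comparison in the moreover-statement.

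\textbf{Sufficiency of \eqref{eq:s1n}.} This is the heart of the matter and is where the hypothesis $\om\in\DD(\D)$ enters. First I would establish a \emph{weak-type $(1,1)$ inequality for $M_\om$ with respect to $\om\,dA$}: there is $C=C(\om)$ with $\om(\{z:M_\om\vp(z)>\lambda\})\le \frac{C}{\lambda}\int_\D|\vp|\om\,dA$ for all $\lambda>0$. The standard route is a Vitali-type covering argument on the level set; the catch is that Carleson squares do not form a nice nested family under dilation, but $\DD(\D)$ is precisely the doubling-type property that makes the covering lemma work (if $S_1\cap S_2\neq\emptyset$ and $\ell(S_1)\ge\ell(S_2)$ then a fixed dilate of $S_1$ contains $S_2$, and $\om$ of that dilate is $\lesssim\om(S_1)$ by iterating the defining inequality of $\DD(\D)$, cf.\ the remarks after its definition and Lemma~\ref{Lemma:test-functions-non-radial}). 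Since $M_\om$ is trivially bounded on $L^\infty_\om$, Marcinkiewicz interpolation gives boundedness of $M_\om$ on $L^s_\om$ for every $1<s\le\infty$; in particular, choosing $s=p\alpha>1$, the operator $\vp\mapsto M_\om(\vp^{1/\alpha})^{\alpha}$ is bounded on $L^p_\om$ (with norm depending on $\om,p,\alpha$). Combining this $L^p_\om$-boundedness with \eqref{eq:s1n} is then a routine Carleson-embedding argument: it suffices to show $L^p_\om\to L^q_\mu$ boundedness of $[M_\om((\cdot)^{1/\alpha})]^\alpha$, and since this operator maps $L^p_\om$ into itself one reduces, via a standard distributional-inequality/stopping-time decomposition over Carleson squares (exactly as in \cite[Theorem~3.3]{PelSum14} and \cite[Theorem~2.1]{PelRat}, which Theorem~\ref{th:cm} is said to extend), to the pointwise fact that the level sets of a $\om$-maximal function are unions of Carleson squares on which $\mu\lesssim\om(S)^{q/p}$. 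The passage from $q=p$ to $q>p$ uses $\ell^{q/p}\hookrightarrow\ell^1$ on the masses of the dyadic-type squares in the decomposition.

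\textbf{Main obstacle.} I expect the only genuine difficulty to be the weak-type $(1,1)$ bound for $M_\om$ on $L^1_\om$ for a merely $\DD(\D)$ weight: one must run a covering/Vitali argument adapted to Carleson squares rather than Euclidean balls, and verify that the $\DD(\D)$ condition supplies exactly the ``engulfing + doubling'' property needed. Everything after that — interpolation, the change of exponent via $\vp\mapsto\vp^{1/\alpha}$, and the Carleson-embedding reduction to \eqref{eq:s1n} including the norm comparisons recorded in the ``moreover'' part — is standard and parallels the radial-weight proofs cited above.
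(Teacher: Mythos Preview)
Your proposal is correct and matches the paper's approach: the paper gives no argument at all, stating only that the result ``can be established by following the lines of the proof of \cite[Theorem~3]{PelRatEmb}'' and omitting the details. Your outline---necessity by testing on $\chi_S$, and sufficiency via a Vitali-type covering lemma for Carleson squares (where $\DD(\D)$ supplies the engulfing/doubling) to get the weak-$(1,1)$ bound for $M_\om$, then interpolation to $L^{p\alpha}_\om$ and a dyadic stopping-time decomposition combined with the testing condition \eqref{eq:s1n}---is exactly the standard route taken in that reference, transported from the radial class $\DD$ to $\DD(\D)$. (One small slip: the sequence-space embedding you invoke for the passage $p\le q$ should read $\ell^1\hookrightarrow\ell^{q/p}$, not the reverse.)
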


Proposition~\ref{co:maxbou} can be established by following the lines of the proof of \cite[Theorem~3]{PelRatEmb}. We omit the details of the argument. A similar result was obtained in~\cite[Theorem~1.1]{KFCol16} under stronger hypotheses on $\om$.



\begin{proofof}{\em{  Theorem~\ref{th:cm}. }}
We will show first that (i) implies (iii) under the weaker hypothesis $0<p,q<\infty$ and $\omega\in\DD(\D)$. To see this, for $a\in\D$ and $0<p,\gamma<\infty$, consider the test functions
	\begin{equation}\label{eq:Fa}
	F_{a,p,\g}(z)=\left(\frac{1-|a|^2}{1-\overline{a}z}\right)^{\frac{\gamma}{p}},\quad z\in\D.
	\end{equation}
Pick up $\gamma=\gamma(p,\om)>0$ sufficiently large such that $\frac{\gamma}{p}>\eta$, where $\eta=\eta(\om)>0$ is that of
Lemma~\ref{Lemma:test-functions-non-radial}(iv). Then
    \begin{equation*}
    \begin{split}
    \mu(S(a))&\lesssim\int_{S(a)}|F_{a,p}(z)|^q\,d\mu(z)\le\int_{\D}|F_{a,p}(z)|^q\,d\mu(z)\lesssim\|F_{a,p}\|_{A^p_\om}^q\lesssim\om\left(S(a)\right)^\frac{q}{p},\quad a\in\D,
    \end{split}
    \end{equation*}
and thus $\mu$ satisfies (iii).

The statements (ii) and (iii) are equivalent by Proposition~\ref{co:maxbou}. Hence, to complete the proof, it suffices to show that (iii) implies $\mu$ is a $q$-Carleson measure for $A^p_\om$. Since $\frac{\om}{\nu}\in B_{p_0}(\nu)$ for some $p_0>1$ and $\nu\in\DD$ by the hypothesis $\om\in\binfhat$, for any Carleson square $S$ and any non-negative $\varphi \in L^{p_0}(\om)$, H\"older's inequality yields
	\begin{equation*}
	\begin{split}
	\frac{1}{\nu(S)}\int_S \varphi \nu\,dA &\le  \frac{1}{\nu(S)} \left(\int_S \varphi^{p_0} \om \,dA\right)^{\frac{1}{p_0}} 
	\left( \int_S\left( \frac{\nu}{\om^{\frac{1}{p_0}}}\right)^{p'_0} \,dA\right)^{\frac{1}{p'_0}} 
	\lesssim\left(\frac{1}{\om(S)}\int_S \varphi^{p_0} \om \,dA\right)^{\frac{1}{p_0}}.
	\end{split}
	\end{equation*}
It follows that $\left(M_\nu(\varphi)\right)^{p_0}\lesssim M_\om(\varphi^{p_0})$ on $\D$. This together with \cite[Lemma~3.2]{PelSum14} shows that for each $s>0$ there exists a constant $C=C(s,\om)>0$ such that 
	\begin{equation}\label{eq:maxdooubling}
	|f(z)|^s\le C M_\om(f^s)(z),\quad z\in\D,\quad f\in \H(\D).
	\end{equation}
By choosing $s=\frac{1}{\alpha}<p$, and using the equivalence between (ii) and (iii) we deduce
	\begin{equation*}
	\begin{split}
	\|f\|^q_{L^q_\mu} 
	&\lesssim\int_\D \left( M_\om(f^{\frac{1}{\alpha}})(z) \right)^{q\alpha}\,d\mu(z)
	\le\|[M_{\om}((\cdot)^{\frac{1}{\alpha}})]^{\alpha}\|^q_{L^p_\om\to L^q_\mu}\| f\|^q_{A^p_\om}.
	\end{split}
	\end{equation*}
To finish the proof of the theorem we observe that \eqref{equiva} follows from the arguments above.
\end{proofof}

For the sake of completeness we describe the $q$-vanishing Carleson measures for $A^p_\om$.

\begin{theorem}\label{th:cmcompact}
Let $0<p\le q<\infty$ and $\om\in\binfhat$, and let $\mu$ be a positive Borel measure on~$\D$. Then $I_d:A^p_\om\to L^q_\mu$ is compact if and only if 
	\begin{equation}\label{eq:compact}
	\lim_{|S|\to0}\frac{\mu\left(S\right)}{\left(\om\left(S\right)\right)^\frac{q}p}=0.
	\end{equation}
\end{theorem}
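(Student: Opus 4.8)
The plan is to mimic the structure of the proof of Theorem~\ref{th:cm}, replacing each quantitative estimate by its ``little-oh'' counterpart, since compactness of $I_d:A^p_\om\to L^q_\mu$ is, by standard arguments for spaces of analytic functions, equivalent to the vanishing of $\mu(S)/\om(S)^{q/p}$ as $|S|\to 0$ rather than merely its boundedness.

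\medskip

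\emph{Necessity.} Assume $I_d:A^p_\om\to L^q_\mu$ is compact. Take the normalized test functions $F_{a,p,\g}$ of \eqref{eq:Fa} with $\g=\g(p,\om)$ large enough that $\g/p>\eta$, where $\eta$ comes from Lemma~\ref{Lemma:test-functions-non-radial}(iv); then $\|F_{a,p,\g}\|_{A^p_\om}^p\lesssim\om(S(a))$, and after normalizing by $\om(S(a))^{1/p}$ the functions $G_a=F_{a,p,\g}/\om(S(a))^{1/p}$ form a bounded family in $A^p_\om$ which tends to $0$ uniformly on compact subsets of $\D$ as $|a|\to 1^-$. A compact operator sends such a family to a norm-null family, so $\|G_a\|_{L^q_\mu}\to 0$; estimating from below by integrating over $S(a)$, where $|F_{a,p,\g}|\gtrsim 1$, gives $\mu(S(a))/\om(S(a))^{q/p}\to 0$ as $|a|\to1^-$, which is \eqref{eq:compact}. (That $|S|\to0$ is the same as $|a|\to1^-$ for $S=S(a)$ is immediate.) Here I would make the uniform-on-compacta convergence of $G_a$ precise by noting $\om\in\DD(\D)$ forces $\om(S(a))\gtrsim (1-|a|)^{\b}$ for a suitable $\b$ by Lemma~\ref{Lemma:test-functions-non-radial}(ii).

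\medskip

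\emph{Sufficiency.} Assume \eqref{eq:compact}. Fix $r\in(0,1)$ and split $\mu=\mu_r+\mu^r$, where $\mu_r$ is the restriction of $\mu$ to $\{|z|\le r\}$ and $\mu^r$ to $\{|z|>r\}$. The measure $\mu^r$ satisfies the Carleson condition \eqref{eq:s1n} with a constant $\e_r=\sup_{|S|<c(1-r)}\mu(S)/\om(S)^{q/p}$ (together with the finitely many ``large'' squares, which contribute a multiple of $\e_r$ once we use that squares of comparable size have comparable $\om$-mass by $\om\in\DD(\D)$), and $\e_r\to0$ as $r\to1^-$ by hypothesis. Applying the quantitative part of Theorem~\ref{th:cm}, namely \eqref{equiva}, to $\mu^r$ gives $\|I_d\|_{A^p_\om\to L^q_{\mu^r}}^q\lesssim\e_r$. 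On the other hand, if $f_n$ is bounded in $A^p_\om$ and $f_n\to0$ uniformly on compacta, then $\int_{|z|\le r}|f_n|^q\,d\mu\to0$ since $\mu_r$ is a finite measure on a compact set. Combining, $\limsup_n\|f_n\|_{L^q_\mu}^q\lesssim\e_r\sup_n\|f_n\|_{A^p_\om}^q$ for every $r$, and letting $r\to1^-$ yields $\|f_n\|_{L^q_\mu}\to0$; since every bounded sequence in $A^p_\om$ has a subsequence converging uniformly on compacta, this proves $I_d$ is compact.

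\medskip

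The main obstacle is the bookkeeping in the sufficiency direction: one must verify that the Carleson constant of the truncated measure $\mu^r$ genuinely goes to $0$, which requires controlling the ``big'' Carleson squares (those not contained near the boundary) — and this is exactly where $\om\in\DD(\D)$, guaranteeing $\om(S(a))\asymp\om(S(a'))$ for $|a|=|a'|$ and allowing one to pass from small squares deep in the disc to comparable ones, is essential. Everything else is a routine normal-families argument, and the necessity direction is a direct transcription of the corresponding step in the proof of Theorem~\ref{th:cm}.
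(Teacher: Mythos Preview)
Your proposal is correct and follows essentially the same route as the paper: normalized test functions $F_{a,p,\gamma}/\om(S(a))^{1/p}$ for necessity, and for sufficiency the splitting of $\mu$ into a compactly supported part and a boundary part, applying the quantitative estimate \eqref{equiva} of Theorem~\ref{th:cm} to the latter and a normal-families argument to the former. The paper handles the ``bookkeeping'' step you flag in exactly the way you anticipate---for a large square $S(a)$ it covers $S(a)\cap\{|z|\ge r_0\}$ by $n$ small Carleson squares $S(I_k)$ with $|I_k|=1-r_0$, uses the hypothesis on each, and then invokes $\om\in\DD(\D)$ to bound $\sum_k\om(S(I_k))\le\om(S(2I_a))\lesssim\om(S(a))$; your phrase ``finitely many large squares'' should really read ``each large square covered by finitely many small ones,'' but the idea is the same.
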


\begin{proofof}{\em{Theorem~\ref{th:cmcompact}}.}
Let $0<p\le q<\infty$ and $\om\in\binfhat$, and first that assume that $I_d:\, A^p_\om\to L^q_\mu$ is compact. For each
$a\in\D$, consider the function
    \begin{equation}\label{testfunctions}
    f_{a,p,\gamma}(z)= F_{a,p,\gamma}(z) \om\left(S(a)\right)^{-\frac1p},\quad z\in\D,
    \end{equation}
where $ F_{a,p,\gamma}$ is the function defined in \eqref{eq:Fa}. Then by repeating the argument of \cite[Theorem~2.1(ii)]{PelRat} and using Lemma~\ref{Lemma:test-functions-non-radial}, we deduce 
    $$
    \lim_{|a|\to1^-}\frac{\mu\left(S(a) \right)}{\left(\om\left(S(a)
    \right)\right)^\frac{q}p}=0,
    $$
and thus \eqref{eq:compact} is satisfied.
		
Conversely, assume that $\mu$ satisfies \eqref{eq:compact}, and
set
    $$
    d\mu_r(z)=\chi_{\left\{r\le |z|<1\right\}}(z)\,d\mu(z),\quad z\in\D.
    $$
Then Theorem~\ref{th:cm} implies
    $$
    \|h\|_{L^q_{\mu_r}}\lesssim K_{\mu_r}\|h\|_{A^p_\om},\quad h\in A^p_\om,
    $$
where $K_{\mu_r}=\sup_{a\in\D\setminus\{0\}}\frac{\mu_r\left(S(a) \right)}{\left(\om\left(S(a)\right)\right)^\frac{q}p}$. We will prove next that     
		\begin{equation}\label{kmur}
    \lim_{r\to 1^-}K_{\mu_r}=0,
    \end{equation}
and then the rest of the proof follows as that of \cite[Theorem~2.1(ii)]{PelRat}. By the assumption, for a given $\e>0$ , there exists $r_0\in(0,1)$ such that
	\begin{equation}\label{eq:c1}
	\sup_{a\in\D: \,|a|\ge r_0}\frac{\mu\left(S(a) \right)}{\left(\om\left(S(a)\right)\right)^\frac{q}p}<\e.
	\end{equation}
Therefore for each $r\in(0,1)$, we have 
	\begin{equation}\label{eq:c2}
	\sup_{a\in\D: \,|a|\ge r_0}\frac{\mu_r\left(S(a) \right)}{\left(\om\left(S(a)\right)\right)^\frac{q}p}
	\le \sup_{a\in\D: \,|a|\ge r_0}\frac{\mu\left(S(a) \right)}{\left(\om\left(S(a)\right)\right)^\frac{q}p}<\e.
  \end{equation}
Next, if $|a|<r_0$, we choose $n\in\N\setminus\{1\}$ such that $(n-1)(1-r_0)<|I_a|\le n(1-r_0)$. Let $I_k$ be arcs on the boundary such that $|I_k|=1-r_0$ for all $k=1,\dots n$, and $I_a\subset\cup_{k=1}^n I_k\subset 2I_a$, where $2I_a=\left\{e^{i\theta}:|\arg(a e^{-i\theta})|\le (1-|a|)\right\}$,
where $I_j$ and $I_m$, $j\neq m$, $j,m\in\{1,2,\dots n\}$ are disjoint or share an endpoint. Let $r\ge r_0$. Then, since $\om\in\DD(\D)$ by the hypothesis, \eqref{eq:c1} yields
\begin{equation*}\begin{split}
\mu_r\left( S(a) \right) &\le \mu_{r_0}\left( S(a) \right)\le \sum_{k=1}^n \mu\left( S(I_k) \right)\le \e \sum_{k=1}^n
\left(\om\left(S(I_k)
    \right)\right)^\frac{q}p
    \\ & \le \e \left( \sum_{k=1}^n
\om\left( S(I_k) \right) \right)^{\frac{q}{p}}
    \le \e \om \left ( S(2I_a) \right)^{\frac{q}{p}} 
		\lesssim\e \om \left ( S(a) \right)^{\frac{q}{p}}.
	\end{split}
	\end{equation*}
This together with \eqref{eq:c2} gives \eqref{kmur}, and finishes the proof.
\end{proofof}

\section{Littlewood-Paley inequalities }\label{sec:LP}

We begin with Theorem~\ref{th:averageweightII}, splitting its proof in two parts. We first establish an equivalent norm to $\|\cdot\|_{A^p_\om}$ and a Littlewood-Paley inequality in terms of the average weight 
	$$
	\om_{h,r}(z)=\frac{\int_{\Delta(z,r)}\om(\z)\,dA(\z)}{(1-|z|)^2},\quad z\in\D,
	$$
where $r\in(0,1)$, and $\Delta(z,r)=\{u\in\D: \rho(u,z)<r\}$.

\begin{proposition}\label{pr:averageweight}
Let $\om\in\binfhat$, $0<r<1$ and $0<p<\infty$. Then the following statements hold:
	\begin{enumerate}
	\item[\rm(i)]
	$\displaystyle\|f\|_{A^p_\om}\asymp \|f\|_{A^p_{\om_{h,r}}}$ for all $f\in\H(\D)$;
	\item[\rm(ii)]
	$\displaystyle\int_\D|f^{(k)}(z)|^p(1-|z|)^{kp}\om_{h,r}(z)\,dA(z)+\sum_{j=0}^{k-1}|f^{(j)}(0)|^p
	\lesssim\|f\|^p_{A^p_\om}$ for all $f\in\H(\D)$.	
	\end{enumerate}
\end{proposition}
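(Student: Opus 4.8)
The plan is to prove (i) first and then deduce (ii) by combining it with the standard pointwise estimate for derivatives of analytic functions. Throughout, implicit constants may depend on $p$, $k$ and the radius involved.

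\emph{The estimate $\gtrsim$ in (i).} This direction holds for an arbitrary weight. For $\zeta\in\D$ the pseudohyperbolic disc $\Delta(\zeta,r)$ contains the Euclidean disc $D(\zeta,\tfrac r2(1-|\zeta|))$, and $1-|z|\asymp 1-|\zeta|$ on $\Delta(\zeta,r)$. Hence subharmonicity of $|f|^p$ gives $|f(\zeta)|^p\lesssim(1-|\zeta|)^{-2}\int_{\Delta(\zeta,r)}|f|^p\,dA\lesssim\int_{\Delta(\zeta,r)}|f(z)|^p(1-|z|)^{-2}\,dA(z)$, and since $\chi_{\Delta(z,r)}(\zeta)=\chi_{\Delta(\zeta,r)}(z)$, Fubini's theorem yields
\[
\int_\D|f(z)|^p\om_{h,r}(z)\,dA(z)=\int_\D\om(\zeta)\left(\int_{\Delta(\zeta,r)}\frac{|f(z)|^p}{(1-|z|)^2}\,dA(z)\right)dA(\zeta)\gtrsim\int_\D|f(\zeta)|^p\om(\zeta)\,dA(\zeta).
\]

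\emph{The estimate $\lesssim$ in (i).} Here the point is that $d\mu=\om_{h,r}\,dA$ is a $p$-Carleson measure for $A^p_\om$. Since $\om\in\binfhat$ we have $\om\in\DD(\D)$, so by Theorem~\ref{th:cm} (with $q=p$) it suffices to verify $\sup_S\mu(S)/\om(S)<\infty$. Interchanging the order of integration, using $\int_{\Delta(\zeta,r)}(1-|z|)^{-2}\,dA(z)\asymp1$ and that $\Delta(\zeta,r)\cap S(a)\neq\emptyset$ forces $\zeta$ to lie in a fixed dilation $S^{*}(a)\supseteq S(a)$ of the Carleson square $S(a)$ (with $|S^{*}(a)|\asymp|S(a)|$), we obtain
\[
\mu(S(a))=\int_\D\om(\zeta)\left(\int_{S(a)\cap\Delta(\zeta,r)}\frac{dA(z)}{(1-|z|)^2}\right)dA(\zeta)\lesssim\om\big(S^{*}(a)\big)\lesssim\om\big(S(a)\big),
\]
the last inequality being a consequence of $\om\in\DD(\D)$ via Lemma~\ref{Lemma:test-functions-non-radial}(iii) (together with the angular comparison recorded just before that lemma); the range of $a$ bounded away from $\partial\D$ is handled by a routine compactness remark. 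Thus $\int_\D|f|^p\om_{h,r}\,dA\lesssim\|f\|^p_{A^p_\om}$.

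\emph{Proof of (ii).} A Cauchy estimate on $D(z,\tfrac14(1-|z|))$ followed by subharmonicity of $|f|^p$ produces a fixed $r_1\in(0,1)$, independent of $z$, with $|f^{(k)}(z)|^p(1-|z|)^{kp}\lesssim(1-|z|)^{-2}\int_{\Delta(z,r_1)}|f(u)|^p\,dA(u)$. Multiplying by $\om_{h,r}(z)$, integrating over $\D$, and applying Fubini as above — for $z\in\Delta(u,r_1)$ one has $1-|z|\asymp 1-|u|$ and $\Delta(z,r)\subseteq\Delta(u,r_2)$ with $r_2=(r+r_1)/(1+rr_1)\in(0,1)$ — one gets
\[
\int_\D|f^{(k)}(z)|^p(1-|z|)^{kp}\om_{h,r}(z)\,dA(z)\lesssim\int_\D|f(u)|^p\om_{h,r_2}(u)\,dA(u)\lesssim\|f\|^p_{A^p_\om},
\]
the last step being the estimate $\lesssim$ of part (i) with radius $r_2$. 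Finally, for $0\le j<k$ a Cauchy estimate at the origin and subharmonicity bound $|f^{(j)}(0)|^p$ by a constant times $\int_{\{|z|\le 3/4\}}|f|^p\,dA$, which is $\lesssim\|f\|^p_{A^p_\om}$ because $\om$ has positive mass on $\{1/2\le|z|\le 3/4\}$, a consequence of the standing assumption $\om(S(a))>0$.

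\emph{Main obstacle.} The delicate step is the estimate $\lesssim$ in (i). A direct interchange of the order of integration is circular — it simply reproduces $\int_\D|f|^p\om_{h,r}\,dA$ — so one must route the argument through the Carleson embedding of Theorem~\ref{th:cm}, and this is precisely where the hypothesis $\om\in\binfhat$ (as opposed to only $\om\in\DD(\D)$) is genuinely used. For instance $\om=\chi_{\D\setminus\Gamma(1)}$ lies in $\DD(\D)\setminus\binfhat$, and for such $\om$ the quantity $\om(\Delta(z,r))$ is not comparable across radii $r\in(0,1)$ (it vanishes for small $r$ yet is positive for $r$ near $1$ as $z\in(0,1)\to1$), so the more hands-on, lattice-based routes to the upper bound break down for weights merely in $\DD(\D)$.
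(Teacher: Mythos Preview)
Your argument follows the paper's approach almost verbatim: subharmonicity plus Fubini for the lower bound in (i), the Carleson embedding of Theorem~\ref{th:cm} for the upper bound, and the standard pointwise derivative estimate combined with (i) at a larger radius for the main term in (ii).

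The one genuine slip is in your handling of $\sum_{j=0}^{k-1}|f^{(j)}(0)|^p$. The claim that $\om$ has positive mass on $\{1/2\le|z|\le3/4\}$ does \emph{not} follow from the standing assumption $\om(S(a))>0$: the radial weight $\om=\chi_{\{|z|>9/10\}}$ satisfies $\om(S(a))>0$ for every $a\in\D\setminus\{0\}$, and it even lies in $\binfhat$ (take $\nu=\om$; then $\nu\in\DD$ and $\om/\nu\equiv1\in B_p(\nu)$ trivially), yet it vanishes identically on that annulus. Moreover, positive mass on an annulus would not by itself yield $\int_{|z|\le3/4}|f|^p\,dA\lesssim\|f\|^p_{A^p_\om}$, since the mass could be concentrated on a set where $f$ is small. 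The correct route is the one you already used for the upper bound in (i): apply Theorem~\ref{th:cm} once more, this time to the measure $\chi_{D(0,3/4)}\,dA$. One only needs $|S(a)\cap D(0,3/4)|\lesssim\om(S(a))$; for $|a|\ge3/4$ the left side vanishes, while for $|a|<3/4$ it follows from $\inf_{|b|=3/4}\om(S(b))>0$ (continuity of $\theta\mapsto\om(S(\tfrac34 e^{i\theta}))$ on the compact circle together with the standing positivity assumption) and the monotonicity $\om(S(a))\ge\om(S(\tfrac34 e^{i\arg a}))$. This is exactly what the paper does.
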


\begin{proof}
(i) Let $0<r<1$ be fixed. Then Fubini's theorem yields
	\begin{equation*}
	\begin{split}
	\int_{S(a)}\frac{\om(\Delta(\zeta,r))}{(1-|\z|)^2}\,dA(\zeta)
	&=\int_{\{z\in\D:S(a)\cap\Delta(z,r)\ne\emptyset\}}\left(\int_{S(a)\cap\Delta(z,r)}\frac{dA(\z)}{(1-|\z|)^2}\right)\om(z)\,dA(z)\\
	&\le\int_{S(b)}\left(\int_{\Delta(z,r)}\frac{dA(\z)}{(1-|\z|)^2}\right)\om(z)\,dA(z)\asymp\om(S(b)),\quad |a|>r,
	\end{split}
	\end{equation*}
where $b=b(a,r)\in\D$ satisfies $\arg b=\arg a$ and $1-|b|\asymp1-|a|$ for all $a\in\D\setminus\overline{D(0,r)}$. Since $\binfhat\subset\DD(\D)$, $\om(S(b))\lesssim\om(S(a))$ by Lemma~\ref{Lemma:test-functions-non-radial}(ii), and therefore Theorem~\ref{th:cm} yields
	\begin{equation}\label{eq:average}
  \|f\|^p_{A^p_{\om_{h,r}}}\lesssim\|f\|_{A^p_\om}^p,\quad f\in\H(\D).
	\end{equation}
To see the converse inequality, use the subharmonicity of $|f|^p$ and Fubini's theorem to deduce
	\begin{equation*}
	\begin{split}
	\|f\|_{A^p_\om}^p
	\lesssim\int_{\D}\om(\z)\left(\int_{\Delta(\z,r)}\frac{|f(z)|^p}{(1-|z|)^2}\,dA(z)\right)dA(\z)
	=\|f\|^p_{A^p_{\om_{h,r}}},\quad f\in\H(\D).
	\end{split}
	\end{equation*}
Thus (i) is proved.

(ii) Let $0<r<1$ be fixed. It is well known that, for each $0<p<\infty$, $k\in\N$ and $0<s<1$, we have  
    \begin{equation}\label{Eq:suharmonic-n-derivatives}
    |f^{(k)}(z)|^p\lesssim\frac{1}{(1-|z|)^{2+kp}}\int_{\Delta(z,s)}|f(\z)|^p\,dA(\z),\quad z\in\D,\quad f\in\H(\D),
    \end{equation}
see, for example, \cite[Lemma~2.1]{Luecking1985} for details. Fix now $0<s<1$ such that $R=s+r<1$. Then an application of \eqref{Eq:suharmonic-n-derivatives}, Fubini's theorem and Part (i) give
	\begin{equation*}
	\begin{split}
	\int_\D|f^{(k)}(z)|^p(1-|z|)^{kp}\om_{h,r}(z)\,dA(z) 
	&\lesssim\int_\D\left(\int_{\Delta(z,s)}\frac{|f(\z)|^p}{(1-|\z|)^2} \,dA(\z)\right)\om_{h,r}(z)\,dA(z)\\
	&=\int_\D \frac{|f(\z)|^p}{(1-|\z|)^2}\left(\int_{\Delta(\z,s)}\om_{h,r}(z)\,dA(z)\right)dA(\z)\\
	&\lesssim \|f\|^p_{A^p_{\om_{h,R}}}
	\asymp\|f\|_{A^p_\om}^p,\quad f\in\H(\D).
	\end{split}
	\end{equation*}
Moreover, for each $j\in\N$, $|f^{(j)}(0)|^p\lesssim\int_{D(0,\frac12)}|f|^p\,dA$ by the subharmonicity of $|f|^p$, and therefore Theorem~\ref{th:cm} implies $|f^{(j)}(0)|^p\lesssim\|f\|_{A^p_\om}^p$ once we show that $\int_{S\cap D(0,\frac12)}\,dA\lesssim\om(S)$ for all Carleson squares $S$. This last inequality is obviously valid if $S=S(a)$ with $|a|\ge\frac12$ because in this case the left hand side equals zero. For $|a|\le\frac12$ we have
		$$
		\int_{S(a)\cap D(0,\frac12)}\,dA
		\le\frac{1}{8}
		\le\frac{1}{8}\frac{\om(S(a))}{\inf_{a\in\overline{D(0,\frac12)}}\om(S(a))}
		\lesssim\om(S(a)).
		$$
This finishes the proof.
\end{proof}

Given a weight $\om$ and $\beta\in\mathbb{R}$, we denote $\om_{[\beta]}(z)=(1-|z|)^\beta\om(z)$ for all $z\in\D$. We will use this definition to shorten the notation in several instances in the proofs from here after.

\begin{proposition}\label{th:wide}
Let $k\in\N$. Then the following statements hold:
\begin{enumerate}
\item[(i)] If $0<p\le 1$ and $\om\in\binfhat$, then
	$$
	\|f\|^p_{A^p_\om}\lesssim\int_\D|f^{(k)}(z)|^p(1-|z|)^{kp}\widetilde{\om}(z)\,dA(z)+\sum_{j=0}^{k-1}|f^{(j)}(0)|^p , \quad f\in\H(\D).
	$$
\item[(ii)] If $1<p<\infty$ and $\om\in\mathcal{B}_\infty(\DDD)$, then
	$$
	\|f\|^p_{A^p_\om}\lesssim\int_\D|f^{(k)}(z)|^p(1-|z|)^{kp}\widetilde{\om}(z)\,dA(z)+\sum_{j=0}^{k-1}|f^{(j)}(0)|^p , \quad f\in\H(\D).
	$$
\end{enumerate}
\end{proposition}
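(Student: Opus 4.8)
\textbf{Proof plan for Proposition~\ref{th:wide}.}

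The plan is to reduce the Littlewood–Paley lower bound for $\om$ to the corresponding one for the smoother weight $\widetilde{\om}$, exploiting that $\widetilde{\om}$ is essentially constant on hyperbolically bounded regions so that the classical radial-type machinery applies, and then feeding the output back through the Carleson-measure characterization of Theorem~\ref{th:cm}. First I would record the elementary pointwise relation between $\widetilde{\om}$ and the hyperbolic averages $\om_{h,r}$: since $\Delta(z,r)$ and $S(z)$ are comparable regions once $|z|$ is bounded away from $0$, and since $\om\in\binfhat\subset\DD(\D)$ permits one to compare $\om(S(b))$ with $\om(S(a))$ whenever $\arg b=\arg a$ and $1-|b|\asymp1-|a|$ (Lemma~\ref{Lemma:test-functions-non-radial}(ii)), one gets $\widetilde{\om}(z)\asymp\om_{h,r}(z)$ for $|z|$ away from the origin, with constants independent of $z$. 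Hence it suffices to prove the claimed inequality with $\widetilde{\om}$ replaced by $\om_{h,r}$ on the right-hand side (the contribution of a fixed compact neighbourhood of the origin being harmless, handled exactly as the $|f^{(j)}(0)|^p$ terms in Proposition~\ref{pr:averageweight}).

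Next, having localized to $\om_{h,r}$, I would invoke that $\om_{h,r}$ is a weight which is essentially constant on each hyperbolically bounded region: indeed $\om_{h,r}(z_1)\asymp\om_{h,r}(z_2)$ whenever $\rho(z_1,z_2)$ is small, because then $\Delta(z_1,r)$ and $\Delta(z_2,r)$ are comparable and $(1-|z_1|)\asymp(1-|z_2|)$. For such weights the lower Littlewood–Paley inequality
	$$
	\|g\|^p_{A^p_{\om_{h,r}}}\lesssim\int_\D|g^{(k)}(z)|^p(1-|z|)^{kp}\om_{h,r}(z)\,dA(z)+\sum_{j=0}^{k-1}|g^{(j)}(0)|^p
	$$
is available provided the Carleson/Bekoll\'e–Bonami–type condition needed to run the argument holds. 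The standard route (as in \cite{AlCo} or \cite{APR2}) is via the integral representation $g(z)-\sum_{j=0}^{k-1}\frac{g^{(j)}(0)}{j!}z^j = c_k\int_0^z\cdots$ iterated $k$ times, which writes $g$ as (a polynomial plus) a $k$-fold primitive of $g^{(k)}$; applying a Hardy-type / Forelli–Rudin estimate reduces the claim to showing that the measure $d\mu(z)=|g^{(k)}(z)|^p(1-|z|)^{kp}\om_{h,r}(z)\,dA(z)$, transported through the kernel, is controlled, which via Fubini amounts to the Carleson condition $\mu(S)\lesssim \om_{h,r}(S)^{q/p}$ with $q=p$, i.e. $\mu(S)\lesssim\om_{h,r}(S)$. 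This is where the two cases $p\le1$ and $1<p<\infty$ genuinely diverge and where the hypotheses differ.

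For $p\le1$ (part (i)) the subadditivity $(\sum a_j)^p\le\sum a_j^p$ makes the estimates robust: one can telescope over a Whitney/dyadic decomposition of $\D$ and needs only $\om_{h,r}\in\binfhat$ — equivalently, after the averaging, only the $\DD(\D)$-type upper regularity of $\om$ — together with Theorem~\ref{th:cm} applied with $q=p$ to the measure built from $f^{(k)}$. Concretely, I expect the argument to run: bound $|f(z)|^p$ by a sum over the hyperbolic "ancestors" of $z$ of terms $|f^{(k)}(w)|^p(1-|w|)^{kp}$ times geometric weights (using \eqref{Eq:suharmonic-n-derivatives} repeatedly), integrate against $\om_{h,r}$, swap the order of summation, and collapse the geometric series using $\om_{h,r}\in\DD(\D)$; then apply Proposition~\ref{pr:averageweight}(i) in reverse to return from $\om_{h,r}$ to $\om$. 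For $1<p<\infty$ (part (ii)) the triangle inequality costs a dual exponent and the telescoping sum must be resummed with H\"older, which forces control of $\om_{h,r}(S(a))^{-p'/p}$-type averages from below along Carleson squares — i.e. a genuine $\Dd(\D)$ (tail-growth) condition on $\om$, not merely $\DD(\D)$. This is exactly why the hypothesis is upgraded to $\om\in\mathcal{B}_\infty(\DDD)$; I would use here the quantitative decay estimate of Lemma~\ref{le:Dd}, $\om(S(a))\gtrsim\bigl(\tfrac{1-|a|}{1-|b|}\bigr)^{\beta_0}\om(S(a)\setminus D(0,|b|))$, to sum the H\"older-weighted geometric series, after transferring it from $\om$ to $\om_{h,r}$ (which also lies in $\DDD(\D)$ since $\om\in\mathcal{B}_\infty(\DDD)\subset\DDD(\D)$ and averaging preserves this).

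\textbf{Main obstacle.} The delicate point is the $1<p<\infty$ case: making the H\"older step in the resummation of the telescoped Whitney sum produce only constants depending on $\om$, uniformly in $f$. The crux is converting the pointwise/dyadic decomposition into a genuine operator bound — one must identify the relevant "backward shift"-type averaging operator on sequences and show it is bounded on the appropriate weighted $\ell^p$ space with weights $\om_{h,r}(S_j)$, which is precisely where the lower tail-regularity from $\Dd(\D)$ (via Lemma~\ref{le:Dd}) is indispensable and where the radial-only proofs of \cite{PelaezRattya2019} need to be adapted to carry the non-radial, angularly-localized Carleson squares. Everything else — the averaging comparisons, the subharmonicity estimates \eqref{Eq:suharmonic-n-derivatives}, the passage $\om\leftrightarrow\om_{h,r}\leftrightarrow\widetilde{\om}$, and the origin terms — is routine given Theorem~\ref{th:cm} and Proposition~\ref{pr:averageweight}.
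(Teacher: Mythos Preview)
Your route diverges from the paper's. You propose a discrete Whitney/telescoping scheme: reduce $\widetilde\om$ to $\om_{h,r}$, exploit that $\om_{h,r}$ is essentially constant on hyperbolic balls, telescope $f$ over hyperbolic ancestors, and sum using subadditivity for $p\le1$ or H\"older plus Lemma~\ref{le:Dd} for $p>1$. The paper instead stays continuous throughout and never passes through $\om_{h,r}$. Its starting point is the reproducing-kernel estimate \cite[Proposition~4.27]{Zhu},
\[
\Bigl|f(z)-\sum_{j=0}^{k-1}f^{(j)}(0)\Bigr|\lesssim\int_\D\frac{|f^{(k)}(\zeta)|(1-|\zeta|)^{\beta+k-1}}{|1-\bar z\zeta|^{1+\beta}}\,dA(\zeta),
\]
valid for $\beta$ large because $A^p_\om\hookrightarrow A^1_{\beta-1}$ by Theorem~\ref{th:cm}. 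For $p\le1$ the paper pulls the $p$-th power inside via the standard Bergman embedding $A^p_\alpha\subset A^1_{\beta+k-1}$, applies Fubini, and uses Lemma~\ref{Lemma:test-functions-non-radial}(iv) on $\int_\D\om(z)|1-\bar z\zeta|^{-p(1+\beta)}\,dA(z)$ to obtain $\om(S(\zeta))/(1-|\zeta|)^2=\widetilde\om(\zeta)$ directly. For $p>1$ the paper splits the kernel by H\"older, $1+\beta=\alpha_1+\alpha_2$ with $p'\alpha_2>2$, so the $\alpha_2$-factor produces $(1-|z|)^{-\varepsilon}$ with $\varepsilon=(p'\alpha_2-2)p/p'$; the genuine work --- half the proof --- is then to show that $\om_{[-\varepsilon]}=(1-|\cdot|)^{-\varepsilon}\om\in\DDD(\D)$ for small $\varepsilon>0$, after which Lemma~\ref{Lemma:test-functions-non-radial}(iv) applies to $\om_{[-\varepsilon]}$ and the exponents collapse. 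Lemma~\ref{le:Dd} does enter, but not to sum a discrete series: it is used to bound $\int_{|a|}^1(1-t)^{-1-\varepsilon}\om(S(a)\setminus D(0,t))\,dt$ and thereby verify $\om_{[-\varepsilon]}\in\Dd(\D)$. So the ``main obstacle'' you identify --- bounding a backward-shift operator on weighted $\ell^p$ --- is bypassed entirely; the paper's obstacle is instead stability of $\DDD(\D)$ under multiplication by $(1-|z|)^{-\varepsilon}$.

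One concrete slip in your plan: you propose to bound $|f(z)|^p$ by a sum over hyperbolic ancestors ``using \eqref{Eq:suharmonic-n-derivatives} repeatedly''. But \eqref{Eq:suharmonic-n-derivatives} goes the wrong way --- it controls $|f^{(k)}|$ by local averages of $|f|$, not $|f|$ by derivatives. For the direction you need, you must use the iterated primitive $\int_0^z\cdots$ you mention earlier, or the reproducing formula above; subharmonicity alone cannot manufacture the lower bound. Your discrete strategy is plausible once this is fixed, but the paper's kernel route is shorter and makes $\widetilde\om$ appear for free from Lemma~\ref{Lemma:test-functions-non-radial}(iv) rather than through a separate $\widetilde\om\asymp\om_{h,r}$ reduction.
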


\begin{proof}
(i).  Let $0<p\le1$. First observe that $\om\in\binfhat\subset\DD(\D)$,  
 and hence, by Lemma~\ref{Lemma:test-functions-non-radial}(ii) and Theorem~\ref{th:cm}, there exists $\b_0=\beta_0(\om,p)>0$ such that $A^p_\om$ is continuously embedded into $A^1_{\beta-1}$ for all $\b\ge\b_0$. A well-known reproducing formula for functions in $A^1_{\b-1}$ \cite[Proposition~4.27]{Zhu} now guarantees the estimate
	\begin{equation}\label{eq:estimate}
	\left|f(z)-\sum_{j=0}^{k-1}f^{(j)}(0)\right|
	\lesssim\int_\D\left|\frac{f^{(k)}(\zeta)}{(1-\overline{z}\zeta)^{1+\beta}}\right|(1-|\z|)^{\beta+k-1}\,dA(\z),\quad z\in\D.
	\end{equation}
Fix $\b\ge\b_0$ sufficiently large such that $p(1+\b)\ge\eta$, where $\eta=\eta(\om)>0$ is that of Lemma~\ref{Lemma:test-functions-non-radial}(iv), and $\alpha=p(\b+k+1)-2>-1$. Then $A^p_\alpha\subset A^1_{\b+k-1}$ by a well-known embedding that can be also deduced from Theorem~\ref{th:cm}, and hence
	$$
		\left|f(z)-\sum_{j=0}^{k-1}f^{(j)}(0)\right|^p\lesssim\int_\D\left|f^{(k)}(\zeta)\right|^p\frac{(1-|\z|)^\a}{|1-\overline{z}\zeta|^{p(1+\beta)}}\,dA(\z),\quad z\in\D.
	$$
Therefore Fubini's theorem and Lemma~\ref{Lemma:test-functions-non-radial}(iv) yield
	\begin{equation}
	\begin{split}\label{eq:aver1}
	\left\|f-\sum_{j=0}^{k-1}f^{(j)}(0)\right\|_{A^p_\om}^p
	&\lesssim\int_\D\left|f^{(k)}(\zeta)\right|^p(1-|\z|)^\a\left(\int_\D\frac{\om(z)}{|1-\overline{z}\zeta|^{p(1+\beta)}}\,dA(z)\right)dA(\z)\\
	&\lesssim\int_\D\left|f^{(k)}(\zeta)\right|^p(1-|\z|)^{kp}\frac{\om(S(\zeta))}{(1-|\z|)^2}\,dA(\z),\quad f\in\H(\D).
	\end{split}
	\end{equation}
Thus (i) is proved.

(ii) Let now $1<p<\infty$. Observe that $\om\in\mathcal{B}(\DDD)\subset\DDD(\D)$. We begin with showing that for each $\om\in\DDD(\D)$ there exists $\e_0>0$ such that $\om_{[-\e]}\in\DDD(\D)$ for all $0<\e<\e_0$. To see this first note that by Lemma~\ref{le:Dd} there exists $\b=\b(\om)>0$ such that $\om(\D\setminus D(0,r))\lesssim(1-r)^\beta$ for all $0\le r<1$. This and Fubini's theorem yield
	\begin{equation*}
	\begin{split}
	\int_{\D\setminus D(0,\frac12)}\frac{\om(z)}{(1-|z|)^\e}\, dA(z)
	&\asymp\int_{\D\setminus D(0,\frac12)}\om(z)\left(\int_{D(0,|z|)}\frac{dA(\zeta)}{(1-|\z|)^{1+\e}}\right)\,dA(z)\\
	&=\int_\D\frac1{(1-|\z|)^{1+\e}}\left(\int_{\D\setminus D\left(0,\max\left\{\frac12,|\z|\right\}\right)}\om(z)\,dA(z)\right)\,dA(\z)\\
	&\lesssim\int_\D\frac{dA(\z)}{(1-|\z|)^{1+\e-\b}}<\infty
	\end{split}
	\end{equation*}
for each $\e<\b$. Thus $\om_{[-\e]}$ is a weight all $0<\e<\b$. Further, for such an $\e$, the inequality \eqref{Eq:d-check-characterization} and the hypothesis $\om\in\Dd(\D)$ yield 
\begin{equation*}
\begin{split}
\int_{S(a)}\frac{\om(z)}{(1-|z|)^\e}\,dA(z)
&=\int_{S(a)}\om(z)\left(\e\int_0^{|a|}\frac{dt}{(1-t)^{\e+1}}+\e\int_{|a|}^{|z|}\frac{dt}{(1-t)^{\e+1}}+1\right)\,dA(z)\\
&=\frac{\om(S(a))}{(1-|a|)^\e}+\e\int_{|a|}^1\left(\int_{S(a)\setminus D(0,t)}\om(z)\,dA(z)\right)\frac{dt}{(1-t)^{\e+1}}\\
&\lesssim\frac{\om(S(a))}{(1-|a|)^\e}+
\frac{\om(S(a))}{(1-|a|)^\b}\e\int_{|a|}^1\frac{dt}{(1-t)^{\e+1-\b}}\\
&=\frac{\om(S(a))}{(1-|a|)^\e}+
\frac{\om(S(a))}{(1-|a|)^\e}\frac{\e}{\b-\e}\\
&\lesssim\frac{\om(T_K(a))}{(1-|a|)^\e}\le\int_{T_K(a)}\frac{\om(z)}{(1-|z|)^\e}\,dA(z),\quad a\in\D\setminus\{0\},
\end{split}
\end{equation*}
and thus $\om_{[-\e]}\in\Dd(\D)$, provided $\om\in\Dd(\D)$ and $0<\e<\b$. Further, since $\om\in\Dd(\D)$, there exists $K=K(\om,\e)>1$ such that $\om_{[-\e]}(S(a))\lesssim\om_{[-\e]}\left(T_K(a)\right)$ and $\om(S(a))\lesssim\om\left(T_K(a)\right)$ for all $a\in\D\setminus\{0\}$. Write $a'=\frac{1+|a|}{2}e^{i\arg a}$ for short. Then the inequalities just obtained and the hypothesis $\om\in\DD(\D)$ yield
	\begin{equation}\label{Eq:giuln}
	\begin{split}
	\om_{[-\e]}(S(a))
	&\lesssim\om_{[-\e]}\left(T_K(a)\right)
	\le\frac{K^\e\om\left(T_K(a)\right)}{(1-|a|)^\e}
	\le\frac{K^\e\om\left(S(a)\right)}{(1-|a|)^\e}
	\lesssim\frac{\om\left(S\left(a'\right)\right)}{(1-|a'|)^\e}\\
	&\lesssim\frac{\om\left(T_K\left(a'\right)\right)}{(1-|a'|)^\e}
	\le\om_{[-\e]}\left(T_K\left(a'\right)\right)
	\le\om_{[-\e]}\left(S\left(a'\right)\right),\quad a\in\D\setminus\{0\},
	\end{split}
	\end{equation}
and hence $\om_{[-\e]}\in\DD(\D)$, for all $0<\e<\b$. Therefore we have shown that $\om_{[-\e]}\in\DDD(\D)$ for all $0<\e<\e_0=\b$.

To prove the statement (ii) of the proposition, fix $\alpha_2\in\left(\frac{2}{p'},\frac{2}{p'}+\frac{\e_0}{p}\right)$, where $\e_0=\e_0(\om)>0$ is the constant we just found. Then $\e=\frac{p}{p'}(p'\alpha_2-2)\in(0,\e_0)$, and thus $\om_{[-\e]}\in\DD(\D)$. Let now $\b>\max\{\b_0,(\eta+2(p-1)+\e_0)/p-1\}$, where $\eta=\eta(\om_{[-\e]})>0$ is that of Lemma~\ref{Lemma:test-functions-non-radial}(iv) and $\b_0=\b_0(\om,1)$ is that of Part (i). Finally, write $1+\b=\alpha_1+\alpha_2$. Then the estimate \eqref{eq:estimate} and H\"older's inequality imply
	\begin{equation*}
	\begin{split}
		\left|f(z)-\sum_{j=0}^{k-1}f^{(j)}(0)\right|^p
	&\lesssim\int_\D |f^{(k)}(\zeta)|^p\frac{(1-|\z|)^{p(\beta+k-1)}}{|1-\overline{z}\z|^{p\alpha_1}}\,dA(\z)
	\left(\int_\D\frac{dA(\z)}{|1-\overline{z}\z|^{p'\alpha_2}}\right)^\frac{p}{p'}\\
	&\lesssim\int_\D  |f^{(k)}(\zeta)|^p\frac{(1-|\z|)^{p(\beta+k-1)}}{|1-\overline{z}\z|^{p\alpha_1}}\,dA(\z)(1-|z|)^{(2-p'\alpha_2)\frac{p}{p'}},
	\end{split}
	\end{equation*}
because $p'\alpha_2>2$. By using this and Fubini's theorem we deduce
	\begin{equation*}
	\begin{split}
	\left\|f-\sum_{j=0}^{k-1}f^{(j)}(0)\right\|_{A^p_\om}^p
	&\lesssim\int_\D |f^{(k)}(\zeta)|^p(1-|\z|)^{p(\beta+k-1)}
    \left(\int_\D\frac{\om(z)(1-|z|)^{(2-p'\alpha_2)\frac{p}{p'}}}{|1-\overline{z}\zeta|^{p\alpha_1}}\,dA(z)\right)dA(\z)
	\end{split}
	\end{equation*}
for all $f\in\H(\D)$. Since $\e=\frac{p}{p'}(p'\alpha_2-2)\in(0,\e_0)$ and $p\alpha_1>\eta$ by our choices, we may apply Lemma~\ref{Lemma:test-functions-non-radial}(iv) to the inner integral above. This together with \eqref{Eq:giuln} imply
	\begin{equation*}
	\begin{split}
	\left\|f-\sum_{j=0}^{k-1}f^{(j)}(0)\right\|_{A^p_\om}^p
	&\lesssim\int_\D |f^{(k)}(\zeta)|^p(1-|\z|)^{p(\beta+k-1)}
  \left(\int_\D\frac{\om(z)(1-|z|)^{(2-p'\alpha_2)\frac{p}{p'}}}{|1-\overline{z}\zeta|^{p\alpha_1}}\,dA(z)\right)dA(\z)\\
	&\lesssim\int_\D\ |f^{(k)}(\zeta)|^p(1-|\z|)^{p(\beta+k-1)}\frac{\om_{[-\e]}(S(\zeta))}{(1-|\zeta|)^{p\alpha_1}}\,dA(\z)\\
	&\asymp\int_\D |f^{(k)}(\zeta)|^p (1-|\z|)^{kp}\frac{\om(S(\zeta))}{(1-|\z|)^2}\,dA(\z),\quad f\in\H(\D),
	\end{split}
	\end{equation*}
and finishes the proof of (ii).
\end{proof}

With these preparations we can deduce Theorem~\ref{th:averageweightII}. Namely, it is easy to see that for each $\om\in\DDD(\D)$, and in particular for each $\om\in\mathcal{B}_\infty(\DDD)$, there exists $r_0=r_0(\om)\in(0,1)$ such that $\om_{h,r}\asymp\widetilde{\om}$ in $\D$ for each $r\ge r_0$. Therefore Theorem~\ref{th:averageweightII} follows from Propositions~\ref{pr:averageweight} and~\ref{th:wide}.

We proceed to prove Theorems~\ref{th:LPI} and~\ref{th:LPII} in the said order.

\bigskip

\begin{Prf} Theorem~\ref{th:LPI}.
Let $0<r<1$ be fixed. The inequality \eqref{Eq:suharmonic-n-derivatives}, Fubini's theorem and Proposition~\ref{pr:averageweight}(i) yield
	\begin{equation*}
	\begin{split}
	\int_\D|f^{(k)}(z)|^p(1-|z|)^{kp}\om(z)\,dA(z)
	&\lesssim\int_\D\left(\int_{\Delta(z,r)}|f(\zeta)|^p\,dA(\z)\right)\frac{\om(z)}{(1-|z|)^{2}}\,dA(z)\\
	&\asymp\int_\D|f(\zeta)|^p\frac{\om(\Delta(\zeta,r))}{(1-|\z|)^2}\,dA(\zeta)\\
	&=\|f\|^p_{A^p_{\om_{h,r}}}\asymp\|f\|_{A^p_\om}^p,\quad f\in\H(\D).
	\end{split}
	\end{equation*}
Moreover, arguing as in the proof of Proposition~\ref{pr:averageweight}(ii) we deduce
	$$
	\sum_{j=0}^{k-1}|f^{(j)}(0)|^p\lesssim\|f\|_{A^p_\om}^p,\quad f\in\H(\D).
	$$
By combining the above estimates we get the assertion.
\end{Prf}

\bigskip

\begin{Prf} Theorem~\ref{th:LPII}.
By Theorem~\ref{th:LPI} it suffices to prove
	\begin{equation*}
	\int_\D|f^{(k)}(z)|^p(1-|z|)^{kp}\om(z)\,dA(z)+\sum_{j=0}^{k-1}|f^{(j)}(0)|^p
  \gtrsim\|f\|_{A^p_\om}^p,\quad f\in\H(\D).
	\end{equation*}
To see this, we will need to know more about the weights involved. In particular, we want to show that $\om_{[kp]}\in B_\infty(\DDD)$ for each $k\in\mathbb{N}$ and $0<p<\infty$. We will deduce this in several steps. First observe that if $\om\in\Dd(\D)$ and $\b>0$, then $\om_{[\b]}\in\Dd(\D)$. Namely, if $\om\in\Dd(\D)$ there exists $K=K(\om)>1$ such that for each $\b>0$ we have  
	$$
	\om_{[\beta]}(T_K(a))
	\ge\frac{(1-|a|)^\beta}{K^\b}\om(T_K(a))
	\gtrsim(1-|a|)^\beta\om(S(a))
	\ge\om_{[\beta]}(S(a)), \quad a \in\D\setminus\{0\},
	$$
and hence $\om_{[\b]}\in\Dd(\D)$. If in addition $\om\in\DDD(\D)$, then $\om_{[\b]}\in\DD(\D)$. To see this, let $a\in\D\setminus\{0\}$ and write $a'=\frac{1+|a|}{2}e^{i\arg a}$ for short. Since $\om\in\Dd(\D)$, there exists $K=K(\om)>1$ such that $\om(T_K(a))\asymp\om(S(a))$ for all $a\in\D\setminus\{0\}$. By using this and the hypothesis $\om\in\DD(\D)$ we deduce
	\begin{equation*}
	\begin{split}
	\om_{[\b]}(S(a))
	&\le(1-|a|)^\b\om(S(a))
	\asymp\left(1-|a|\right)^\b\om(S(a'))\\
	&\asymp\left(1-\left(1-\frac{1-|a'|}{K}\right)\right)^\b\om(T_K(a'))\\
	&\le\om_{[\b]}(S(a')),\quad a\in\D\setminus\{0\}.
	\end{split}
	\end{equation*}
and thus $\om_{[\b]}\in\DD(\D)$. Therefore we have shown that $\om_{[\b]}\in\DDD(\D)$, provided $\om\in\DDD(\D)$ and $\b>0$.

The other property we need to know is that if $\om\in\binfD$, then $\om_{[\b]}\in\binfD$ for all $\b>0$. We will use the fact we just proved to see this
and the fact that  $\frac{\om}{\nu}\in\bpnu$ if and only if there exists a constant $C=C(\om,\nu)>0$ such that
	\begin{equation}\label{carbpnu}
	\left(\frac{\int_{S}|f(z)|\nu(z)\,dA(z)}{\nu(S)}\right)^{p}
	\le C\frac{\int_{S}|f(z)|^p\om(z)\,dA(z)}{\om(S)}
	\end{equation}
for all Carleson squares $S$ and all measurable functions $f$ on $\D$.
 Next observe that if $\nu\in\DDD(\D)$, $1<p<\infty$ and $\frac{\om}{\nu}\in B_p(\nu)$, then $\frac{\om_{[\b]}}{\nu_{[\b]}}\in B_p(\nu_{[\b]})$ for all $0<\b<\infty$. Namely, if $\frac{\om}{\nu}\in B_p(\nu)$, then \eqref{carbpnu} yields
	\begin{equation*}
	\begin{split}
	\left(\frac{\int_{S(a)}|f(z)|\nu_{[\beta]}(z)\,dA(z)}{\nu_{[\b]}(S(a))}\right)^p
	&\le C\frac{\nu(S(a))^p}{\nu_{[\b]}(S(a))^p}\frac{\int_{S(a)}|f(z)|^p(1-|z|)^{p\beta}\om(z)\,dA(z)}{\om(S(a))}\\
	&\le C\frac{\nu(S(a))^p}{\nu_{[\b]}(S(a))^p}(1-|a|)^{(p-1)\beta}\frac{\int_{S(a)}|f(z)|^p\om_{[\b]}(z)\,dA(z)}{\om(T_K(a))}\\
	&\le C\frac{(1-|a|)^{p\beta}\nu(S(a))^p}{\nu_{[\b]}(S(a))^p}\frac{\int_{S(a)}|f(z)|^p\om_{[\b]}(z)\,dA(z)}{\om_{[\b]}(T_K(a))}
	\end{split}
	\end{equation*}
for all $a\in\D\setminus\{0\}$ and all measurable functions $f$ on $\D$. Since $\nu\in\DDD(\D)$ by the hypothesis, then $\om\in\DDD(\D)$, and hence $\om_{[\b]}\in\DDD(\D)$. Therefore $\om_{[\b]}(T_K(a))\asymp\om_{[\b]}(S(a))$ for all $a\in\D\setminus\{0\}$, provided $K=K(\om,\b)>1$ is sufficiently large. Moreover, for $M=M(\nu)>1$ sufficiently large, we have
	$$
	\nu_{[\b]}(S(a))
	\le(1-|a|)^\b\nu(S(a))
	\asymp(1-|a|)^\b\nu(T_M(a))
	\le M^\b\nu_{[\b]}(T_M(a))
	\le M^\b\nu_{[\b]}(S(a))
	$$
for all $a\in\D\setminus\{0\}$. It follows that
	\begin{equation*}
	\begin{split}
	\left(\frac{\int_{S(a)}|f(z)|\nu_{[\beta]}(z)\,dA(z)}{\nu_{[\b]}(S(a))}\right)^p
	\lesssim\frac{\int_{S(a)}|f(z)|^p\om_{[\b]}(z)\,dA(z)}{\om_{[\b]}(S(a))},\quad a\in\D\setminus\{0\},
	\end{split}
	\end{equation*}
for all measurable functions $f$ on $\D$. This shows that $\frac{\om_{[\b]}}{\nu_{[\b]}}\in B_p(\nu_{[\b]})$.

Finally, by the hypothesis $\om\in\binfD$, and hence there exist $1<p<\infty$ and $\nu\in\DDD$ such that $\frac{\om}{\nu}\in B_p(\nu)$. Therefore $\frac{\om_{[\b]}}{\nu_{[\b]}}\in B_p(\nu_{[\b]})$ for all $0<\b<\infty$. Moreover, $\nu_{[\b]}\in\DDD$, and hence $\om_{[\b]}\in \binfD$.

Now we can proceed to prove the statement of the theorem. Recall that we just showed that $\om_{[kp]}\in B_\infty(\DDD)$ for each $k\in\mathbb{N}$ and $0<p<\infty$ by the hypothesis $\om\in B_\infty(\DDD)$. In particular $\om_{[kp]}\in\DDD(\D)$, and hence the inequality we are after now follows from Proposition~\ref{th:wide} and Theorem~\ref{th:cm} if we show that  
	\begin{equation}\label{Eq:fukifuki}
	\int_S(1-|\z|)^{kp}\frac{\om(S(\zeta))}{(1-|\z|)^2}\,dA(\z)\lesssim\int_S(1-|\z|)^{kp}\om(\zeta)\,dA(\z),\quad S\subset\D.
	\end{equation}
Since $\om\in\Dd(\D)$, there exists $r=r(\om)\in(0,1)$ sufficiently large such that $\om(S(\zeta))\lesssim\om(\Delta(\zeta,r))$ for all $\z\in\D$. Therefore 
	\begin{equation*}
	\begin{split}
	\int_{S(a)}(1-|\z|)^{kp}\frac{\om(S(\zeta))}{(1-|\z|)^2}\,dA(\z)
	&\lesssim\int_{S(a)}(1-|\z|)^{kp}\frac{\om(\Delta(\zeta,r))}{(1-|\z|)^2}\,dA(\z)\\
	&=\int_{\{z:S(a)\cap\Delta(z,r)\ne\emptyset\}}\om(z)\left(\int_{S(a)\cap\Delta(z,r)}(1-|\z|)^{kp-2}\,dA(\z)\right)\,dA(z)\\
	&\lesssim\int_{\{z:S(a)\cap\Delta(z,r)\ne\emptyset\}}(1-|z|)^{kp}\om(z)\,dA(z)\\
	&\le\int_{S(a')}(1-|z|)^{kp}\om(z)\,dA(z),\quad a\in\D\setminus\{0\},
	\end{split}
	\end{equation*}
where $\arg a'=\arg a$ and $1-|a'|\asymp1-|a|$ for all $a\in\D\setminus\{0\}$. Moreover, since $\om_{[kp]}\in\DD(\D)$, we have $\om_{[kp]}(S(a'))\lesssim\om_{[kp]}(S(a))$ for all $a\in\D\setminus\{0\}$. This gives \eqref{Eq:fukifuki} and finishes the proof.
\end{Prf}

\section{Spectra of integration operator}\label{sec:integral}

Let $\B$ denote the classical space of Bloch functions, $\B_0$ the little Bloch space
and $D_{a}=\left\{z\in\D: |z-a|<\frac{1-|a}{2}\right\}$ for all $a\in\D$. Recall that $\widetilde{\om}$ is essentially constant in each hyperbolically bounded region if $\om\in\mathcal{B}_\infty(\DDD)$. This together with Theorem~\ref{th:averageweightII} and \cite[Corollary~4.4-Theorem~1.7]{APR2} implies that $\tom\in B_\infty$. Therefore the next result follows from \cite[Theorem~4.1]{AlCo}, Theorem~\ref{th:averageweightII} and the fact that 
	$$
	\tom(D_{a})\asymp\om(S(a)),\quad a\in\D\setminus\{0\},
	$$
provided $\om\in\mathcal{B}_\infty(\DDD)$.

\begin{theorem}\label{th:Tgnew}
Let $\om\in\mathcal{B}_\infty(\DDD)$ and $0<p,q<\infty$. Then the following statements hold:
	\begin{enumerate}
	\item[(i)] If $0<p\le q<\infty$, then
	$T_g: A^p_\om\to A^q_\om$ is bounded if and only if 
	$$
	\sup_{a\in\D} (1-|a|)|g'(a)|\left(\om(S(a)
 \right)^{\frac1q-\frac1p}<\infty.
	$$
	\item[(ii)] If $0<p\le q<\infty$, then
	$T_g: A^p_\om\to A^q_\om$ is compact if and only if 
	$$
	\lim_{|a|\to 1^-} (1-|a|)|g'(a)|\left(\om(S(a)\right)^{\frac1q-\frac1p}=0.
	$$
  \item[(iii)] If $0<q<p<\infty$, then $T_g: A^p_\om\to A^q_\om$ is bounded (equivalently compact) if and only if $g\in A^s_\om$, where $\frac1s=\frac1p-\frac1q$. 
	\end{enumerate}
\end{theorem}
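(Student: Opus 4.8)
The plan is to transport the whole problem to the smoother companion weight $\widetilde{\om}$, for which the desired characterization is already available. First I would record that, since $\om\in\mathcal{B}_\infty(\DDD)$, Theorem~\ref{th:averageweightII} gives $\|h\|_{A^s_\om}\asymp\|h\|_{A^s_{\widetilde{\om}}}$ for every $0<s<\infty$ and every $h\in\H(\D)$; in particular, for each such $s$ the spaces $A^s_\om$ and $A^s_{\widetilde{\om}}$ consist of the same analytic functions and the identity map between them is a (quasi-)Banach space isomorphism. Writing $J_r$ for this identity map $A^r_{\widetilde{\om}}\to A^r_\om$, the operators $T_g\colon A^p_\om\to A^q_\om$ and $T_g\colon A^p_{\widetilde{\om}}\to A^q_{\widetilde{\om}}$ differ only by pre-composition with $J_p^{-1}$ and post-composition with $J_q$, so one is bounded (resp.\ compact) if and only if the other is. Moreover, as noted right after Theorem~\ref{th:averageweightII}, $\widetilde{\om}$ is essentially constant in each hyperbolically bounded region, whence $\widetilde{\om}\in B_\infty$ by \cite[Corollary~4.4 and Theorem~1.7]{APR2}.

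Next I would invoke \cite[Theorem~4.1]{AlCo} for the $B_\infty$-weight $\widetilde{\om}$. For $0<p\le q<\infty$ it characterizes boundedness of $T_g\colon A^p_{\widetilde{\om}}\to A^q_{\widetilde{\om}}$ by $\sup_{a\in\D}(1-|a|)|g'(a)|\,\widetilde{\om}(D_a)^{\frac1q-\frac1p}<\infty$ and compactness by the vanishing of the same expression as $|a|\to1^-$; for $0<q<p<\infty$ it gives that boundedness, equivalently compactness, holds if and only if $g\in A^s_{\widetilde{\om}}$ with $\frac1s=\frac1p-\frac1q$. Combined with the equivalence of operators from the first step, this reduces (i), (ii) and (iii) to rewriting the $\widetilde{\om}$-conditions in terms of $\om$.

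Part (iii) is then immediate, since $g\in A^s_{\widetilde{\om}}$ if and only if $g\in A^s_\om$ by the norm equivalence already used. For (i) and (ii) it remains to prove $\widetilde{\om}(D_a)\asymp\om(S(a))$, uniformly in $a\in\D\setminus\{0\}$: as $D_a$ has bounded pseudohyperbolic radius and $\widetilde{\om}$ is essentially constant on hyperbolically bounded regions, $\widetilde{\om}(D_a)\asymp\widetilde{\om}(a)\,|D_a|\asymp\widetilde{\om}(a)(1-|a|)^2$, while $\widetilde{\om}(a)(1-|a|)^2=\om(S(a))$ directly from the definition of $\widetilde{\om}$. Substituting $\widetilde{\om}(D_a)\asymp\om(S(a))$ into the conditions of the previous step yields exactly the statements in (i) and (ii).

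I do not anticipate a serious obstacle: the substance is front-loaded into Theorem~\ref{th:averageweightII} (and into $\widetilde{\om}\in B_\infty$, which rests on \cite{APR2}) and into \cite[Theorem~4.1]{AlCo}. The only points needing a little care are that the norm equivalence is strong enough to transfer compactness and not merely boundedness --- automatic once one observes the identity maps are isomorphisms of (quasi-)Banach spaces --- and the elementary comparison $\widetilde{\om}(D_a)\asymp\om(S(a))$, which again only uses that $\widetilde{\om}$ is essentially constant on hyperbolic balls together with the defining formula for $\widetilde{\om}$.
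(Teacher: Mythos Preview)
Your proposal is correct and follows essentially the same route as the paper: the authors likewise reduce to $\widetilde{\om}$ via Theorem~\ref{th:averageweightII}, note $\widetilde{\om}\in B_\infty$ through \cite[Corollary~4.4 and Theorem~1.7]{APR2}, invoke \cite[Theorem~4.1]{AlCo}, and use $\widetilde{\om}(D_a)\asymp\om(S(a))$ to rewrite the conditions. Your write-up just makes explicit the transfer of boundedness/compactness through the identity isomorphisms and the verification of $\widetilde{\om}(D_a)\asymp\om(S(a))$, which the paper leaves to the reader.
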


Theorem~\ref{th:Tgnew} shows, in particular, that $T_g: A^p_\om\to A^p_\om$ is bounded (resp. compact) if and only if $g\in\B$ (resp. $g\in \B_0$), provided $\om\in\mathcal{B}_\infty(\DDD)$.

We will next study the spectrum of $T_g$ acting on $A^p_\om$, when $\om\in \mathcal{B}_\infty(\DDD)$. We begin with noticing that $T_g$ has no eigenvalues \cite[Proposition~5.1]{AlCo}, and hence its spectrum is nothing else but $\{0\}$ if $g\in\B_0$. The proof
of Theorem~\ref{th:resolvent} follows ideas from the papers \cite{AlCo,AlPe,APR2}, where the approach used reveals a natural connection to weighted norm inequalities for derivatives. This general idea applies to our context as well. Indeed, a simple computation shows that for a  given analytic function $h$ in $\D$ and $\lambda\in\mathbb{C}\setminus\{0\}$, the equation
	$$
	\lambda f-T_gf=h
	$$
has the unique solution $f=\frac{1}{\lambda}R_{\lambda,g}h$, where
	\begin{equation}\label{eq:resol}
	R_{\lambda,g}h(z)=h(0)e^{\frac{g(z)}{\lambda}}+e^{\frac{g(z)}{\lambda}}\int_0^ze^{-\frac{g(\xi)}{\lambda}}h'(\xi)\,d\xi,\quad
	z\in\D.
	\end{equation}
Thus $\lambda$ belongs the resolvent set $\rho\left(T_g|A^p_\om\right)$ if and only if $R_{\lambda,g}$ is a bounded
invertible operator on $A^p_\om$.

\bigskip

\begin{Prf} {\em{Theorem~\ref{th:resolvent}}}. The equivalence between (i) and (ii) follows by arguing as in the proof of \cite[Theorem~5.1]{AlCo} and applying Theorems~\ref{th:LPII} and~\ref{th:Tgnew}.

To see that (ii) and (iii) are equivalent, observe that since $\tom$ is essentially constant in each hyperbolically bounded region, the proof of \cite[Proposition~2.1((b)]{APR2} shows that there exists a differentiable weight $W$ such that $\tom\asymp W$ on $\D$, and 
	\begin{equation}\label{logbloch}
	|\nabla W(z)|\lesssim (1-|z|)W(z),\quad z\in\D.
	\end{equation}  
Therefore, by arguing as in the first part of the proof, but applying Theorem~\ref{th:averageweightII} instead of Theorem~\ref{th:LPII}, we deduce that $\lambda\in\rho\left(T_g|A^p_\om\right)=\rho\left(T_g|A^p_{\widetilde{\om}}\right)=\rho\left(T_g|A^p_{W}\right)$ if and only if  
	\begin{equation*}
	\|f\|_{A^p_{W_{\lambda,g,p}}}^p\asymp |f(0)|^p+\int_\D|f'(z)|^p(1-|z|)^p W_{\lambda,g,p}(z)\,dA(z),\quad f\in\H(\D),
	\end{equation*}
where $W_{\lambda,g,p}=W\exp\left( p \text{Re}\frac{g}{\lambda} \right)$. Since $g\in\B$, also the weight $W_{\lambda,g,p}$ satisfies \eqref{logbloch}. Indeed, 
	\begin{equation*}
	\begin{split}
	\left|\nabla W\exp\left(p\text{Re}\frac{g}{\lambda}\right)(z)\right| 
	&\lesssim\left(|\nabla W(z)|+\frac{p|g'(z)|W(z)}{|\lambda|} \right)\exp\left( p \text{Re}\frac{g}{\lambda}\right)(z)\\
	&\lesssim\frac{W\exp\left( p \text{Re}\frac{g}{\lambda} \right)(z)}{1-|z|},\quad z\in\D. 
	\end{split}
	\end{equation*}
Hence $W_{\lambda,g,p}$ is essentially constant in each hyperbolically bounded region by \cite[Proposition~2.1(i)]{APR2}. Therefore $\lambda \in \rho\left(T_g|A^p_\om\right)=\rho\left(T_g|A^p_{\widetilde{\om}}\right)=\rho\left(T_g|A^p_{W}\right)$ if and only if $W_{\lambda,g,p}\in B_\infty$ by \cite[Corollary~4.4]{APR2}. This is equivalent to $\tom\exp\left( p \text{Re}\frac{g}{\lambda} \right)\in B_\infty$. 
\end{Prf}


\begin{thebibliography}{99}
\bibitem{AlCo}          A.~Aleman and O.~Constantin,
                        Spectra of integration operators on weighted Bergman spaces,
                        J. Anal. Math. 109 (2009), 199--231.
												
\bibitem{AlPe}          A.~Aleman and J.~A.~Pel\'aez,
                        Spectra of integration operators and weighted square functions,
                        Indiana Univ. Math. J. 61, No. 2 (2012), pp. 775--793. 
                        
\bibitem{APR2}          A.~Aleman, S. Pott and M. C. Reguera, 
                  Characterizations of a limiting class $B_\infty$ of Békollé-Bonami weights,
                   Rev. Mat. Iberoam. 35 (2019), no. 6, 1677--1692. 
                   
\bibitem{AS}            A.~Aleman and A.~Siskakis,
                        Integration operators on Bergman spaces,
                        Indiana Univ. Math. J. 46 (1997), 337--356.

\bibitem{BWZ}          	G.~Bao, H.~Wulan and K.~Zhu, 
                        A Hardy-Littlewood theorem for Bergman spaces, 
												Ann. Acad. Sci. Fenn. 43 (2018), 807--821.

 \bibitem{Bek}           D.~Bekoll\'e,
                        In\'egalit\'es \'a poids pour le projecteur de Bergman dans la boule unit\'e de
                        $C^n$,
                        [Weighted inequalities for the Bergman projection in the unit ball of $C^n$]
                        Studia Math. 71 (1981/82), no. 3, 305--323.

\bibitem{BB}            D.~Bekoll\'e and A.~Bonami,
                        In\'egalit\'es \'a poids pour le noyau de
                        Bergman,
                        (French) C. R. Acad. Sci. Paris Sér. A-B 286 (1978), no. 18, 775--778.
                        
\bibitem{BorichevMathAnn}	A. Borichev, 
													On the Bekoll\'e-Bonami Condition,  
													Math. Ann. 328, 389--398 (2004). 

\bibitem{Duandilibro} 	J.~Duoandikoetxea, Fourier Analysis,
												Graduate Studies in Mathematics, 29,
												Amer. Math. Soc., Providence, RI 2001.

\bibitem{DMOMathZ16} 		J.~Duoandikoetxea, F.~J~Martin-Reyes and S.~Ombrosi, 
												On the $A_\infty$ conditions for general bases,
												Math. Z. 282 (2016), no.~3, 955--972.

\bibitem{KorhonenRattya2018} 	T.~Korhonen and J. R\"atty\"a, 
															Zero sequences, factorization and sampling measures for weighted Bergman spaces,
															Math. Z. 291 (2019), 1145--1173.


\bibitem{HormanderL67}  L.~H\"ormander,
                        $L^p$ estimates for (pluri-)subharmonic functions,
                        Math. Scand. (20) (1967), 65--78.

\bibitem{KFCol16}       C.~D.~Kenfack and B.~F.~Sehba, 
			       Maximal function and Carleson measures in the theory of Békollé-Bonami weights. (English summary)
              Colloq. Math. 142 (2016), no. 2, 211--226. 

\bibitem{KT} 						R.~A.~Kerman and A.~Torchinsky, 
												Integral inequalities with weights for the maximal function, 
												Studia Math. 71 (1981/82), no.~3, 277--284.                          

\bibitem{Luecking1985}  D.~H. Luecking,
                        Forward and reverse inequalities for functions in Bergman spaces and their derivatives,
                        Amer. J. Math. 107, (1985), 85--111. 
                        
                        
\bibitem{Lu87}          D.~H.~Luecking, Trace ideal criteria for Toeplitz operators,
                        J. Funct. Anal. 73 (1987),  345--368.

                        
\bibitem{PavP}      		M. Pavlovi\'{c} and J.~A. Pel\'{a}ez,
												An equivalence for weighted integrals of an analytic function and its derivative,
												Math. Nachr. 281 (2008), no. 11, 1612--1623.


\bibitem{PelRathg}      		J.~A.~ Pel\'aez and J. R\"atty\"a,
														Generalized Hilbert operators on weighted Bergman spaces,
														Adv. Math. 240 (2013), 227--267.
                                                        
\bibitem{PelRat}        J.~Pel\'aez and J.~R\"atty\"a,
                        Weighted Bergman spaces induced by rapidly increasing weights,
                        Mem.~Amer.~Math.~Soc.~227 (2014).

\bibitem{PelRatEmb}     J. A. Pel\'aez and J. R\"atty\"a,
                        Embedding theorems for Bergman spaces via harmonic analysis,
                        Math. Ann. 362 (2015), no. 1-2, 205--239.

\bibitem{PelSum14}      J.~A.~ Pel\'aez, Small weighted Bergman spaces,
                        Proceedings of the summer school in complex and harmonic analysis, and related topics, (2016).
                        
                        
\bibitem{PelRatproj}        J.~A.~ Pel\'aez and J. R\"atty\"a,
                            Two weight inequality for Bergman projection,
                           J. Math. Pures Appl. 105 (2016), 102--130.


\bibitem{PelaezRattya2019}  J.~A.~ Pel\'aez and J. R\"atty\"a,
                            Bergman projection induced by radial weight,
                            accepted for publication in Adv. Math.
                            
\bibitem{PelRatHarmonic}    J.~A.~ Pel\'aez and J. R\"atty\"a,
                             Harmonic conjugates on Bergman spaces induced by doubling weights,
                             Anal. Math. Phys. 10 (2020), no. 2, https://doi.org/10.1007/s13324-020-00362-7
                             
                            
 \bibitem{PRW}                  J.~A.~ Pel\'aez,  J. R\"atty\"a and B.~D.~Wick,
                                Bergman projection induced by kernel with integral representation,
                                J. Anal. Math. 138 (2019), no. 1, 325--360.
															
\bibitem{Si}            A.~Siskakis,
                        Weighted integrals of analytic functions,
                        Acta Sci. Math. (Szeged) 66 (2000), no. 3-4, 651--664.
                        
\bibitem{SteinHarmonic93}   E.~M.~Stein, 
														Harmonic Analysis: Real-Variable Methods, Orthogonality, and Oscillatory integrals, 
														Princeton Univ. Press (1993).
                        
\bibitem{Zhu}           K.~Zhu,
                        Operator Theory in Function Spaces,
                        Second Edition, Math. Surveys and Monographs, Vol. 138, American Mathematical
                        Society: Providence, Rhode Island, 2007.

\end{thebibliography}
\end{document}